\DeclareMathOperator{\Spec}{Spec}
\DeclareMathOperator{\Proj}{Proj}
\DeclareMathOperator{\Hom}{Hom}
\DeclareMathOperator{\Ext}{Ext}
\DeclareMathOperator{\ev}{ev}
\def\mapleft#1{\mathrel{%
\smash{\mathop{\longleftarrow}\limits^{#1}}}}
\renewcommand{\P}{\mathbb{P}}
\newcommand{\N}{\mathbb{N}}
\newcommand{\A}{\mathbb{A}}
\newcommand{\ep}{\varepsilon}
\long\def\comment#1\endcomment{}
\newtheorem{thm}{Theorem}[section]
\newtheorem{lemma}[thm]{Lemma}
\newtheorem{prop}[thm]{Proposition}
\theoremstyle{definition}
\theoremstyle{remark}
\newtheorem{remark}[thm]{Remark}
\newcommand{\NN}{\Gamma} 
\newcommand{\M}{{\mathcal{M}_{g,1}^{\NN}}} 
\newcommand{\LO}{\mathcal{O}} 
\renewcommand{\k}{\mathbf{k}} 
\renewcommand{\l}{\ell} 
\def\y(#1){y_{#1}}
\def\a(#1){a_{#1}}
\def\b(#1){b_{#1}}
\def\f(#1)(#2){f_{#1}^{\scriptscriptstyle(#2)}}
\def\g(#1)(#2){g_{#1}^{\scriptscriptstyle(#2)}}
\def\ff(#1)(#2){f_{#1,#2}}
\def\gg(#1)(#2){g_{#1,#2}}
\newcommand{\half}{{\textstyle \frac12}}
\begin{document}
\title[Moduli of Low Genus Pointed Curves]{The Dimension of  the Moduli Space of Pointed Algebraic Curves of Low Genus}

\author{Jan Stevens}
\address{\scriptsize Department of Mathematical Sciences, Chalmers University of
Technology and University of Gothenburg.
SE 412 96 Gothenburg, Sweden}
\email{stevens@chalmers.se}

\begin{abstract}
We   explicitly  compute  the moduli space pointed algebraic curves  with a given numerical
semigroup  as Weierstrass semigroup for many cases of genus at most 
seven and determine the dimension  for all semigroups of genus seven. 
\end{abstract}

\subjclass[2020]{14H55 14H45 14H10}
\keywords{Weierstrass point; numerical semigroup; pointed curve; versal deformation}
\thanks{}
\maketitle

\section*{Introduction}
On a smooth projective curve $C$ the pole orders of rational functions
with poles only at a given point $P$ form a numerical semigroup,
the Weierstrass semigroup.  The space  $\M$ 
parametrising pointed smooth curves with Weierstrass 
semigroup  at the marked point equal to  $\NN$ is a locally  closed subspace of
the moduli space $\mathcal{M}_{g,1}$.
In this paper we compute the dimension of $\M$ for all semigroups of genus at most seven. 

By the famous result of Pinkham \cite{Pi74} the space $\M$ is closely
related to the negative weight part of the versal deformation of the monomial curve
singularity $C_\NN$ with semigroup $\NN$.
This connection has been used in a series of papers
by Nakano--Mori \cite{NM04} and Nakano \cite{Na08, Na16}
to explicitly determine $\M$ for many semigroups of genus at most six, using
the \textsc{Singular} \cite{DGPS} package \texttt{deform.lib} \cite{Ma}.
In all these cases $\M$ is irreducible and rational. For the remaining cases (with
two exceptions)
irreduciblity and stably rationality was shown by Bullock \cite{Bul14}, with different
methods.

We extend the computations of Nakano  \cite{Na08}. 
One quickly runs into the limits of what can be
computed in reasonable time. Therefore we also use other approaches to compute
deformations. One method is to use Hauser's algorithm \cite{Ha83}; 
the method of Contiero-Stöhr \cite{CoSt} to compute $\M$ is closely related.
In this method one first perturbs the equations in all possible ways, and takes care
of flatness only later. 
This means introducing may new variables, most of which
can be eliminated. In a number of cases this approach is succesful.
In one case it is more convenient to use the projection method
developed by De Jong and Van Straten \cite{td2}, as applied to curves in \cite{Ste93}.

We list the semigroups of genus at most 7 in Tables \ref{tab1} and \ref{tab2}. For 
$g\leq 6$ we follow the notation of \cite{Na08}. The corresponding gap sequences 
are already listed by Haure \cite{Hau96}, in the first published paper
containing the term Weierstrass points. Haure also gives the number of moduli
on which curves with given Weierstrass semigroup depend. Our computations shows 
that his results are correct except in one case.  The non-emptiness of
$\M$ for all semigroups with $g\leq7$ was established by Komeda \cite{Kom94}.

Our tables also contain the 
structure of $\M$ in the cases we have been able to determine it.
In many cases, e.g. if the monomial curve $C_\NN$ is a complete intersection,
the space $\M$ is smooth. The next common case is that $\M$ is a weighted
cone over the Segre embedding of $\P^1\times \P^3$; the curve $C_\NN$ 
has then codimension 3 and is given by 6 equations. 
For codimension 4 and 10 equations the base space
is typically  given by 20 equations and the exact structure depends on the curve.
Except for the curve already studied in \cite{Ste93} these equations
are too complicated, with too many monomials, to be useful. 

As to the dimension of $\M$, in general the following bounds are known
\cite{RV77, Co21}:
\[
2g-2+t-\dim\mathrm{T}^{1,+}\leq \dim \M\leq 2g-2+t\;.
\] 
where $t$ is the rank of the highest syzygy module of the ideal of $C_\NN$, and 
$\dim\mathrm{T}^{1,+}$ the number of deformations of positive weight,
both easily computable with \textsc{Singular} \cite{DGPS} or
\emph{Macaulay2} \cite{GrSt}.
The result of our computations is that for all semigroups 
with $g\leq 7$ the dimension is given by the lower bound.

In the first section we recall the relation between the moduli space $\M$
and deformations of the monomial curve with semigroup $\NN$. The next section
describes the computation methods used in this paper. The main part of the paper
discusses the computation of the moduli space or of its dimension for the different 
types of semigroup.

\section{The moduli space $\M$}
\label{Pinksection}
Let $P$ be a smooth point on  a possibly singular integral complete curve $C$  of arithmetic genus $g>1$, 
defined over an algebraically closed field  $\k$
of characteristic zero. An integer $n\in \N$ is a \textit{gap} if 
there does not exist  rational function  on $C$ 
with pole divisor $nP$, or equivalently
$H^0(C, \LO_{C}(n-1)P))=H^{0}(C, \LO_{C}(nP))$. 
There are exactly $g$ gaps
by the  Weierstrass gap theorem, an easy consequence of Riemann-Roch. The nongaps
form a numerical semigroup $\NN$, the \textit{Weierstrass semigroup} of $C$ at $P$; this is the set 
of nonnegative integers  $n\in\N$ such that there is a rational function  on $C$ 
with pole divisor $nP$. For any numerical semigroup the \textit{genus} is defined as the number of gaps.

Given a numerical semigroup $\NN$ of genus $g>1$, let $\M$ be 
the space parameterising pointed smooth curves with $\NN$ as Weierstrass 
semigroup
at the marked point. It is a locally  closed subspace of  the moduli space $\mathcal{M}_{g,1}$
of pointed smooth curves of genus $g$.
Note that $\M$ can be empty.

The connection between the moduli space $\M$ and  deformations of negative weight of 
monomial curves was first observed by Pinkham \cite[Ch. 13]{Pi74}.
Given a numerical semigroup $\NN=\langle n_1,\dots,n_r\rangle$ we
form the semigroup ring $\k[\NN]:=\oplus_{n\in\NN}\k\,t^{n}$
and denote by $C_{\NN}:=\Spec\k[\NN]$ its associated affine monomial curve.
Consider the  versal deformation of $C_{\NN}$
\begin{equation*}
\begin{matrix} 
\mathcal{X}_{t_0}\cong C_{\NN} & \longrightarrow & \mathcal{X} \\[3pt] 
\Big\downarrow & & \Big\downarrow\\[7pt]
\{t_0\}=\Spec\,\k & \longrightarrow & B
\end{matrix}
\end{equation*}
where $B=\Spec A$ is the spectrum of local, complete noetherian $\k$-algebra.
Pinkham \cite{Pi74} showed that the  natural $\mathbb{G}_{m}$-action on $C_{\NN}$
can be extended to the total  and parameter spaces.
This induces a 
grading on the  tangent  space $T^1_{C_{\NN}}$ to
$B$.  The convention here is that a deformation has negative weight $-e$ if it decreases the
weights of the equations of the curve by $e$;  the corresponding deformation variable
has then (positive) weight $e$.
A numerical semigroup $\NN$ is called \textit{negatively
graded} if  $T^{1}_{C_{\NN}}$ has no positive graded part.

Let $B^-$ be the subspace of $B$ with  
negative weights. Then  the restriction $\mathcal{X}^- \to B^-$ is  
versal  for deformations with good $\mathbb{G}_{m}$-action .
Both $\mathcal{X}^- $ and $ B^-$ are defined by polynomials and we 
use the same symbols for the corresponding affine varieties.
The deformation $\mathcal{X}^- \to B^-$ can be fiberwise compactified
to $\smash{\overline{\mathcal{X}}}^- \to B^-$; each fibre is an integral curve
in a weighted projective space with one point $P$ at infinity and this is a point 
with semigroup $\NN$.
All the fibres over a given
$\mathbb{G}_m$ orbit in $\mathcal{T}^-$ are isomorphic, and two fibres
are isomorphic if and only if they lie in the same orbit.
This is proved in \cite{Pi74} for smooth fibres and in general in the Appendix
of \cite{Lo84}.

Each pointed curve from $\M$ occurs as fibre by the following construction.
Consider the section ring $\mathcal{R} = \oplus_{n=0}^\infty H^0(C,\LO(nP))$.
It gives  an embedding of  $C= \Proj \mathcal R$ in a weighted projective space, 
with coordinates $X_0,  \dots, X_r$ where $\deg X_0 = 1$.  
The space $\Spec  \mathcal R$
is the corresponding quasi-cone in affine space. Setting $X_0 = 0$ defines the monomial 
curve $C_\NN$, all other fibres are isomorphic to $C\setminus P$. In particular, if $C$ is 
smooth, this construction defines a smoothing of $C_\NN$.

\begin{thm}[\null{\cite[Thm. 13.9]{Pi74}}]\label{pinkhamthm}
Let $\mathcal{X}^- \to B^-$ be the equivariant 
negative weight miniversal deformation  
of the monomial curve $C_\NN$ for a given semigroup $\NN$ and denote
by $B^-_s$ the open subset of $B^-$ given by the points
with smooth fibers. Then the moduli space $\M$ is isomorphic to the quotient
$\M=B^-_s/\mathbb{G}_{m}$ of  $B^-_s$ 
by the  $\mathbb{G}_{m}$-action.
\end{thm}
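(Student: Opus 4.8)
The plan is to exhibit the claimed isomorphism as a bijection realized by the fiberwise compactification, with an explicit inverse coming from the section-ring quasi-cone, and then to argue that both directions are algebraic. I would take as the primary map $\Psi\colon B^-_s/\mathbb{G}_{m}\to\M$ sending the orbit of a point $b\in B^-_s$ to the pointed curve $(\overline{\mathcal{X}}^-_b,P)$ obtained by fiberwise compactification; this is well defined because all fibers over one orbit are isomorphic as pointed curves. Two things must be checked: that $\Psi$ is surjective and that it is injective. Surjectivity is exactly the quasi-cone construction recalled above: for $(C,P)\in\M$ the section ring $\mathcal{R}=\oplus_n H^0(C,\LO(nP))$ gives a $\mathbb{G}_{m}$-equivariant one-parameter smoothing of $C_\NN$ of negative weight whose general fiber is $C\setminus P$, so $(C,P)$ is the compactification of such a fiber. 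Injectivity is the sharp form of the cited orbit statement: two fibers of $\mathcal{X}^-$ are isomorphic \emph{if and only if} they lie in the same $\mathbb{G}_{m}$-orbit.

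To produce the inverse, and thereby the scheme isomorphism, I would make the quasi-cone construction functorial. Given a family of pointed smooth curves with semigroup $\NN$ over a base $S$, the relative section ring yields a $\mathbb{G}_{m}$-equivariant family of quasi-cones over $S\times\A^1$ deforming $C_\NN$; by miniversality of $\mathcal{X}^-\to B^-$ in the equivariant negative-weight category, this family is induced by a classifying morphism $S\times\A^1\to B^-$ that is $\mathbb{G}_{m}$-equivariant for the scaling action on the $\A^1$-factor. Restricting to $S\times(\A^1\setminus\{0\})$ and passing to orbits gives a morphism $S\to B^-_s/\mathbb{G}_{m}$, natural in $S$; applied to the universal family over $\M$ this defines $\Phi\colon\M\to B^-_s/\mathbb{G}_{m}$. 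The two composites are the identity: compactifying the general fiber of the quasi-cone of $(C,P)$ returns $(C,P)$, and the classifying map of the quasi-cone of $(\overline{\mathcal{X}}^-_b,P)$ returns the orbit of $b$, since its general fiber is isomorphic to $\mathcal{X}^-_b$ and isomorphic fibers lie in one orbit.

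The main obstacle I expect is not the set-theoretic bijection---whose two halves are already supplied---but the passage to families and quotients: one must verify that the relative section ring really produces a flat equivariant family over $S\times\A^1$ with the correct special and general fibers, that the induced classifying map is equivariant and compatible with base change, and that $B^-_s/\mathbb{G}_{m}$ exists as a geometric quotient. Since the deformation variables all have positive weight, the $\mathbb{G}_{m}$-action on $B^-$ has the origin, corresponding to the singular curve $C_\NN$, as its only fixed point, and this point is excluded from $B^-_s$; the remaining finite stabilizers correspond to automorphisms of the pointed curves, so strictly speaking the clean statement lives at the level of stacks, and one should be careful about whether $\M$ is taken as a coarse space or as a stack.
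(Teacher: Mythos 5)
Your forward half is exactly the outline the paper records around the statement (the theorem itself is cited from \cite{Pi74} rather than proved): the compactification map $\Psi$, surjectivity via the section-ring quasi-cone, and injectivity via the orbit statement, which the paper attributes to \cite{Pi74} for smooth fibres and to the Appendix of \cite{Lo84} in general. The genuine gap is your construction of the inverse $\Phi$. Miniversality is a property at the closed point: it guarantees that a deformation over an Artinian or complete local base is induced by \emph{some} classifying map, and that map is not unique. It therefore gives neither the existence of a classifying morphism $S\times\A^1\to B^-$ for a global family over an arbitrary base $S$, nor any naturality in $S$; the phrase ``by miniversality \dots{} natural in $S$'' is not available. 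Worse, the object you feed into this construction does not exist: $\M$ is a locally closed subspace of the \emph{coarse} moduli space $\mathcal{M}_{g,1}$, and precisely because of the automorphisms you flag in your final sentence (e.g.\ the hyperelliptic involution for $\NN=\langle 2,2g+1\rangle$ fixes the marked point) there is no universal family over $\M$. So $\Phi$ as you define it cannot be constructed, and the check that ``the two composites are the identity'' is meaningless when the classifying map is non-canonical. You noticed the stack/coarse-space issue, but did not register that it undermines your own inverse.

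The standard repair, which is how Pinkham argues, runs the morphism only in the direction of $\Psi$ and at the level of $B^-_s$: the fiberwise compactified family $\overline{\mathcal{X}}^-_s\to B^-_s$ of pointed smooth curves induces, by the coarse moduli property of $\mathcal{M}_{g,1}$ --- which requires no universal family --- a morphism $B^-_s\to\mathcal{M}_{g,1}$ landing in $\M$; it is $\mathbb{G}_m$-invariant, and since all deformation variables have positive weight the geometric quotient $B^-_s/\mathbb{G}_m$ exists (the origin, the unique fixed point, is excluded from $B^-_s$ because the fibre $C_\NN$ there is singular, and the quotient sits inside a weighted projective space), so the morphism descends. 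Your two set-theoretic ingredients then show the descended morphism is bijective. The remaining --- and substantive --- content of the theorem is that this bijective morphism is an isomorphism of schemes: a bijective morphism need not be an isomorphism, even in characteristic zero, unless one knows more (normality of the target, or local inverses built from section rings of local families). That step is exactly what \cite[Thm.\ 13.9]{Pi74} supplies, and your proposal, which delivers only the bijection, does not close it.
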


The closure of a component of $B^-_s$ is a smoothing component and is itself
contained in a smoothing component in $B$. 
For quasihomogeneous curve singularities there is a simple formula for the
dimension of smoothing components: it is $\mu+t-1$ \cite{Gr82},
with $\mu=2\delta-r+1$  the Milnor number and $t=
\dim_\k \Ext^1_\LO(\k,\LO)$ the type. For monomial curves $\delta=g$ and $r=1$,
and the type can be computed from the semigroup \cite[4.1.2]{Bu80} :
 $t=\lambda(\NN)$, the number of gaps $\l$ of $\NN$ such that $\l+n\in\NN$ 
 whenever $n$ is a nongap. 
Given the equations of $C_\NN$ (anyway needed for deformation computations)
the type is easily found as the rank of the highest syzygy module.
 
Let $\dim\mathrm{T}^{1,+}$ be the dimension of the space of 
infinitesimal deformations of $C_\NN$ of positive weight. Then we have the
following bounds for the dimension of components of $M$ \cite{Co21};
the upper bound is due to Rim--Vitulli \cite{RV77}.

\begin{thm}\label{Stevens}  Let $\N$ be a numerical semigroup $\N$ of genus bigger than $1$. 
If $\M$ is nonempty, then for any irreducible component $E$ of $\M$  
\[
2g-2+t-\dim\mathrm{T}^{1,+}\leq \dim E\leq 2g-2+t\;.
\]
\end{thm}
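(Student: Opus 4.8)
The plan is to deduce both inequalities from Pinkham's description $\M=B^-_s/\mathbb{G}_{m}$ (Theorem \ref{pinkhamthm}) together with the dimension formula for smoothing components recalled above. Fix an irreducible component $E\subseteq\M$. It is the image of some irreducible component $\tilde E$ of $B^-_s$ under the quotient map, and since every coordinate on $B^-$ has strictly positive weight, the $\mathbb{G}_{m}$-orbits through nonzero points are one-dimensional; hence $\dim E=\dim\tilde E-1$. By the remark following Theorem \ref{pinkhamthm}, $\overline{\tilde E}$ lies in a smoothing component $S$ of $C_\NN$ in the full versal base $B$, and for the monomial curve $\dim S=\mu+t-1=2g+t-1$, using $\delta=g$, $r=1$, so $\mu=2g$.

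The upper bound follows at once: $\tilde E\subseteq S$ gives $\dim\tilde E\le 2g+t-1$ and therefore $\dim E\le 2g-2+t$, which is the Rim--Vitulli inequality.

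For the lower bound I would exploit the weight grading. Versality embeds $B$ as a $\mathbb{G}_{m}$-invariant closed subscheme of $T^1=\mathrm{T}^{1,-}\oplus\mathrm{T}^{1,0}\oplus\mathrm{T}^{1,+}$, and $B^-$ is the slice on which the coordinate functions of non-negative weight, spanning $\mathrm{T}^{1,0}\oplus\mathrm{T}^{1,+}$, all vanish. Because $\tilde E\subseteq B^-_s$ is a component of $B^-_s$, it is also a component of the open subset $S\cap B^-_s$ of $S\cap B^-$, so $\overline{\tilde E}$ is an irreducible component of $S\cap B^-$. The latter is cut out in $S$ by the $\dim\mathrm{T}^{1,0}+\dim\mathrm{T}^{1,+}$ coordinate functions above, so by Krull's principal ideal theorem
\[
\dim\tilde E=\dim\overline{\tilde E}\ \ge\ \dim S-\dim\mathrm{T}^{1,0}-\dim\mathrm{T}^{1,+}.
\]
Granting that the weight-zero part $\mathrm{T}^{1,0}$ does not contribute, this yields $\dim E\ge 2g-2+t-\dim\mathrm{T}^{1,+}$.

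The upper bound is formal, so all the content sits in the lower one, and the main obstacle is precisely the control of the non-negative-weight directions: one must show that slicing $S$ down to $B^-$ costs at most $\dim\mathrm{T}^{1,+}$. Concretely this amounts to proving that the weight-zero part $\mathrm{T}^{1,0}$ of $T^1_{C_\NN}$ vanishes --- or, more cautiously, that its coordinate functions impose no condition on the components of $S\cap B^-$. I expect this to follow from the explicit combinatorial description of the graded pieces of $T^1_{C_\NN}$ in terms of the semigroup $\NN$: a weight-zero first-order deformation would alter each defining relation of $\k[\NN]$ only by monomials of equal weight, and one checks that every such perturbation is trivial. Establishing this vanishing is where the real work lies; the surrounding dimension count is routine.
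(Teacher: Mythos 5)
Your scaffolding is sound, and it is in fact the intended argument: the paper itself gives no proof of Theorem \ref{Stevens}, citing \cite{Co21} for the bounds and \cite{RV77} for the upper one, and the ingredients you assemble are precisely those the paper recalls just before the statement --- Pinkham's identification $\M=B^-_s/\mathbb{G}_m$ with one-dimensional orbits (the origin is not in $B^-_s$ since the central fibre is singular for $g>1$), Greuel's formula $\mu+t-1$ for smoothing components with $\mu=2\delta-r+1=2g$, and the Krull-height estimate for cutting a smoothing component $S$ down to $S\cap B^-$. Your topological reductions also hold up: a component $\tilde E$ of $B^-_s$ contained in $S$ is a component of the open subset $S\cap B^-_s$ of $S\cap B^-$, so its closure is a component of $S\cap B^-$, of codimension at most $\dim\mathrm{T}^{1,0}+\dim\mathrm{T}^{1,+}$ in $S$, and the count $\dim E=\dim\tilde E-1$ is correct.

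The genuine gap is exactly where you place it: the vanishing $\mathrm{T}^{1,0}=0$ is ``granted'', not proved, and without it your argument only yields $\dim E\geq 2g-2+t-\dim\mathrm{T}^{1,0}-\dim\mathrm{T}^{1,+}$, which is weaker than the stated bound. So as written the proof is incomplete at its decisive step. The claim is true and classical: Rim and Vitulli \cite{RV77} computed the non-negatively graded part of $T^1$ of a monomial curve, and the degree-zero part always vanishes --- this is implicitly presupposed by the paper's definition of ``negatively graded'' as having no \emph{positive} graded part. Your sketch starts correctly but stops short of the content: since $\k[\NN]_d=\k\,t^d$, a degree-zero element of the normal module sends each binomial generator $f_i$ of degree $d_i$ to $c_it^{d_i}$, and because each $n_j$ is a minimal generator of $\NN$, the only degree-zero ambient vector fields are the Euler fields $x_j\,\partial/\partial x_j$; the actual work, which you omit with ``one checks'', is to show that the linear conditions imposed on the vector $(c_i)$ by the first syzygies force it into the span of the $e+1$ vectors coming from these Euler fields. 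Either carry out that syzygy computation or cite \cite{RV77} for it; with that single ingredient supplied, your proof is complete and coincides with the argument in \cite{Co21}.
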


\section{Computing deformation spaces in negative weight}
By Pinkham's theorem (Theorem \ref{pinkhamthm}), to explicitly describe the moduli space $\M$ one can compute the negative weight part of the
versal deformation  of the monomial curve $C_\NN$.
For many semigroups of low genus this was done by Nakano-Mori \cite{NM04} and
Nakano \cite{Na08,Na16}, using the computer algebra system \textsc{Singular} \cite{DGPS}.
The main obstacle in the remaining cases is that the computations take too long,
and result in long formulas without apparent structure.
In this section we describe several methods
to determine versal deformations,
with comments on computational matters. 

\subsection{The standard approach}
We recall the main steps, see also \cite[Ch. 3]{Ste03}.
Let $X$ be a variety with $\mathbb{G}_{m}$-action with isolated singularity
at the origin in $\A^n$. Let $S=\k[X_1,\dots,X_n]$ be the polynomial ring in 
$n$ variables. Let $f=(f_1,\dots,f_r)$ generate the ideal $I(X)$ of $X$.
The first few terms of the resolution of $\k[X]=S/I(X)$ are
$$
0\longleftarrow \k[X]\longleftarrow S 
\mapleft f S^k \mapleft r S^l\;
$$
where the columns of the matrix $r$ generate the module of relations.
Let $X_B\to B$
be a deformation of $X$ over $ B=\Spec A$. The flatness of the map 
 $X_B\to B$
translates into the existence of a lifting of the resolution to
$$
0\longleftarrow \k[X_B]\longleftarrow S\otimes A 
\mapleft F (S\otimes A)^k \mapleft R (S\otimes A)^l\;.
$$
To find the versal deformation we must find a lift $FR=0$ in the most general way.
The first step is to compute infinitesimal deformations. We write 
$F=f+\ep f'$ and $R=r+\ep r'$. As $\ep^2=0$, the condition
$FR=0$ gives
\[
FR=(f+\ep f')(r+\ep r')=fr+\ep(fr'+f'r)=0\;.
\]
Because $fr=0$, we obtain the equation $fr'+f'r=0$ in $S$. We first solve the equation
$f'r=0$ or rather its transpose $r^t(f')^t=0$ in $\k[X]$. This means finding syzygies
between the columns of the matrix $r^t$; then we find $r'$ by lifting $f'r$ with $f$. 
After this we lift order for order. Obstructions to do this may come up,
leading to equations in the
deformation parameters. 
\comment 
We consider now $\ep$ as a parameter
marking the order and suppose that $f'$ depends on deformation parameters.
Suppose we have found $F_{n-1}$ and $R_{n-1}$ with 
$F_{n-1}R_{n-1}\equiv 0 \pmod {\ep^n}$. We want a solution modulo
$\ep^{n+1}$, so we put $F_n=F_{n-1}+\ep^nf^{(n)}$,
$R_n=R_{n-1}+\ep^nr^{(n)}$. Then
$$ 
\displaylines{\qquad
F_nR_n\equiv F_{n-1}R_{n-1}+ \ep^n\big(f^{(n)}R_{n-1}+F_{n-1}r^{(n)}\big)
\hfill\cr\hfill{}
\equiv F_{n-1}R_{n-1}+\ep^n\big(f^{(n)}r+fr^{(n)}\big) \equiv 0 \pmod
{\ep^{n+1}}\;.\qquad\cr} 
$$
Modulo $f$ we have to find $f^{(n)}$ from
$$
\ep^{-n} F_{n-1}R_{n-1}+f^{(n)}r\equiv 0 \pmod {\ep}\;.
$$
This is possible if the normal form of the column vector 
$\ep^{-n} F_{n-1}R_{n-1}$ with respect to the module generated by
the columns of the matrix $r$ is zero.
Here obstructions may come up, which lead to equations in the
deformation parameters: the expressions depending on the
deformation parameters in the normal form have to vanish. 
The step is concluded by finding $r^{(n)}$.
\endcomment

All these computations can be  done with a computer algebra system. Indeed,
they are implemented
implemented \cite{Il,Ma} in   \emph{Macaulay2} \cite{GrSt}
and \textsc{Singular} \cite{DGPS}.
The specific outcome of a computation, which depends  on Groebner basis calculations, 
is governed by the chosen monomial ordering and
also by the choice of the generators $(f_1,\dots,f_k)$ of the ideal
$I(X)$.  
The algorithm tries to find the row vector $F$, equations of the
base space come from obstructions to do that. Typically
a computer computation will not choose the easiest form of the base 
equations.

When restricting to deformations of negative weight all resulting
equations are polynomial and the computation is finite; it might be undoable
in practice, even with a powerful computer.

\subsection{Hauser's algorithm}\label{hauser}
An alternative method was 
developed in the complex analytic setting by Hauser \cite{Ha83,Ha85}.
One can see the method of Contiero-Stöhr \cite{CoSt} to compute 
a compactification of the moduli space $\M$ as a variant.
It has been used  in \cite{Co21} to compute the base space
for several families of Gorenstein monomials curves.
We start again from the generators $f$ of the ideal $I(X)$,
but now we perturb $f$ in the most general way, modulo trivial
perturbations, that is we take a semi-universal unfolding 
of the associated map $f\colon \A^n \to \A^k$. Except when
$X$ is zero dimensional, the base space of this unfolding will
be infinite dimensional; this problem is handled carefully by Hauser  \cite{Ha85}.
In our situation $f$ is weighted homogeneous and we restrict
ourselves to an unfolding with terms of lower degree. Therefore we
are back in a finite dimensional situation, and we can work
over any field $\k$. So we have an unfolding
$F\colon \A^n\times \A^s \to \A^k$ of $f$. Now we determine the 
locus $B$ containing $0\in \A^s$ over which $F$ is flat. The
restriction of $F$ to $B$ is then the versal deformation of $X$
of negative weight.

In our situation we have 
a monomial curve $X$ of multiplicity $m$ and embedding dimension
$e+1$. We take coordinates $x,y_1,\dots,y_e$. An Apéry basis of the 
semigroup leads to an additive realisation of $\k[X]$ as 
$\sum_{i=0}^m y^{(i)}\k[x]$, where $y^{(0)}=1$, $y^{(1)}\dots y^{(m-1)}$ are
expressions in the variables $y_1,\dots,y_e$. The equations of $X$
are then (in multi-index notation) of the form $y^\alpha=\varphi$
with $\varphi \in \sum_{i=0}^m y^{(i)}\k[x]$. The unfolding is also done
only with terms from $\sum_{i=0}^m y^{(i)}\k[x]$.
We start from generators $f$ of the said form, compute the relation matrix $r$
and write the unfolding $F$. We have to lift $fr=0$ to $FR=0$.
To this end we compute $Fr$ and reduce this column vector to normal form with
respect to the list $F$. It is important that we do not compute
a Groebner basis of the ideal generated by  $F$, as this will take too long.
But reducing with respect to $F$ will result in a vector with entries of bounded degree
lying 
in  $\sum_{i=0}^m y^{(i)}R[x]$ with coefficients from $R=\k[t_1,\dots,t_s]$,
where the $t_j$ are coordinates on the base $\A^s$. The vanishing of these
coefficients define the locus where $FR=0$, so where $F$ is flat.

This procedure  leads to a rather large number
of relatively simple equations in a large number of variables, most of which
occur linearly and can be eliminated. It is this process of elimination
which can lead to few equations in a limited number of variables, but with 
many monomials, see the proof of Proposition \ref{propdim} for an example.

Also here most computations are easily done with a computer
algebra system. The first step, to find the unfolding, can be 
automatised, but for the not too complicated
cases relevant for this paper  it seems preferable to do it by hand,
choosing names for the deformation variables reflecting their weights.

\subsection{The projection method}
Computing  deformations using projections onto a hypersurface
is a method developed in a series of papers by Theo de Jong and Duco van Straten,
see  \cite{td1,td2}.
The application to curves is in \cite{Ste93}, see also \cite[Ch. 11]{Ste03}.
Let again $X$ be a monomial curve and $X\to Y$ a projection onto a plane curve,
which is a finite generically injective map. Let 
$\Sigma$ be the subspace of $Y$  by the conductor
ideal  $I={\Hom}_Y({\mathcal O}_{X}, {\mathcal O}_Y)$  in ${\mathcal O}_Y$.
This makes it possible  to reconstruct
$X$, as ${\mathcal O}_{X}={\Hom}_Y(I, {\mathcal O}_Y)$. 
Because we use this method only once, we refer to \cite[Chapter 11]{Ste03} for a 
description how to use deformations the plane curve $Y$ together
with  $\Sigma$ to get deformations of the original curve $X$.
\comment
The fact that ${\mathcal O}_{X}$ has a ring
structure, is equivalent to the 
{\sl Ring Condition\/}
$$
{\rm (R.C.):}\qquad 
{\Hom}_Y({\mathfrak c},{\mathfrak c})\buildrel \approx \over
{\lhook\joinrel\longrightarrow}{\Hom}_Y({\mathfrak c}, {\mathcal O}_Y)\;.
$$  
This condition makes sense
over any basis $B$, so we can set up a deformation theory 
with 
${\rm Def}(\Sigma\hookrightarrow Y, {\rm
R.C.})$  the functor of deformations for which the ideal of
$\Sigma_B$ in $Y_B$ satisfies the condition $({\rm R.C.})$.
In our situation, but also under more general assumptions \cite{td2},
this functor is equivalent to the functor ${\rm Def}(X\to Y)$ of deformations
of the map $X\to Y$, and the natural transformation
${\rm Def}(X\to Y)\longrightarrow{\rm Def}(X)$ is smooth.
\endcomment

The space $\Sigma$ is a fat point, so in particular Cohen-Macaulay of
codimension 2. Therefore the ideal $I$ defining $\Sigma$ in $\A^2$ is
generated by the maximal
minors  ${\Delta_1}\dots{\Delta_k}$ of an $k\times(k-1)$ matrix
$M$. We write these generators as row vector $\Delta$. The curve $Y$
is defined by a function of the form $f=\Delta \alpha$ with $\alpha$
a column vector, or equivalently by the determinant of the matrix $(M,\alpha)$. 
We write an element $n\colon
\Delta_i\mapsto n_i$ of the normal module  
$N:= {\rm Hom}_{\Sigma}(I/I^2,{\mathcal O}_{\Sigma})$  as row vector $n$. 
\comment
Let $N:= {\rm Hom}_{\Sigma}(I/I^2,{\mathcal O}_{\Sigma})$ be
the normal module of $\Sigma$. We write an element $n\colon
\Delta_i\mapsto n_i$ of $N$ as row vector $n$. 
There is an {\sl evaluation
map\/} $\ev_f\colon N\to {\mathcal O}_{\Sigma}$, given by $n\mapsto n(f)$. In
terms of $\alpha$ we can write $\ev_f(n)=n\alpha$. This map descends to a map
$\ev_f\colon T^1_\Sigma\to {\mathcal O}_{\Sigma}$. Let now 
$\Sigma_B\to Y_B$ be any deformation over a base
$(B,0)$, with $Y_B$ defined by a function $F$.
By  \cite[(1.12)]{td2} the ideal $I_B$ of $\Sigma_B$ satisfies the condition \/{\rm
(R.C.)} if and only if the map $ev_F$ is the zero map. 
This means that for every normal vector $n_B$ there
exists a vector $\gamma_B$ on the ambient space,
satisfying  
\endcomment
A deformation $\Sigma_B\to Y_B$ 
 comes from
a deformation of the curve $X$ if for every  normal vector $n_B$ there
exists a vector $\gamma_B$ on the ambient space,
satisfying  
\begin{equation}\label{basic}
n_B\alpha_B+\Delta_B\gamma_B=0\;.
\end{equation}
This is the basic deformation equation, which can be solved step by step.
\comment
Infinitesimal deformations are given by perturbing $f$ to $f+\ep g$
with $g\in I$ satisfying $ ev_g=0$, and perturbing $\Delta$ such the
equation \eqref{basic} is satisfied. Again the lift to higher order
is done step for step, with possible obstructions. 
\endcomment

When restricting to deformations of negative weight the result
of the computation is again given by quasihomogeneous matrices
with polynomial entries. Once setup correctly the computation
is easily done with a computer algebra system.

An important concept here is that of $I^2$-equivalence \cite[Def.~1.14]{td1}: 
two functions $f$ and $g$
are $I^2$-equivalent, if and only if $f-g\in I^2$. 
Suppose $f=\Delta\alpha$ and $g=\Delta\beta$ are $I^2$-equivalent.
Then $\alpha-\beta=A \Delta^t$ for some matrix $A$.
Suppose 
$n_B\alpha_B+\Delta_B\gamma_B$ is a lift of
$n\alpha+\Delta\gamma$ over a base space $B$. Choose any lift $A_B$ of $A$.
Then
\begin{equation}\label{ikweq}
n_B(\alpha_B-A_B\Delta_B^t)+\Delta_B(\gamma_B+A_B^tn_B^t)
\end{equation}
is a  lift of $n\beta+\Delta(\gamma+A^tn^t)$.
\comment
Suppose $f_B=\Delta_B\alpha_B$ and $g_B=\Delta_B\beta_B$ are $I^2$-equivalent.
Then $\alpha_B-\beta_B=A_B\Delta_B^t$ for some matrix $A_B$.
Suppose 
$n_C\alpha_C+\Delta_C\gamma_C$ is a lift of
$n_B\alpha_B+\Delta_B\gamma_B$ over a larger base space $C$. Choose any lift $A_C$ of $A_B$.
Then
$n_C(\alpha_C-A_C\Delta_C^t)+\Delta_C(\gamma_C+A_C^tn_C^t)$ is
a  lift of $n_B\beta_B+\Delta_B(\gamma_B+A_B^tn_B^t)$.
\endcomment
In particular, for curves with projections defined by $I^2$-equivalent functions,
the base spaces of the versal deformation are the same up to a smooth factor.

\section{Semigroups of genus $g\leq7$}
In Tables \ref{tab1} and \ref{tab2} we  list the semigroups of genus at most 7. For 
$g\leq 6$ we follow the notation of \cite{Na08}. The tables also contain 
also the dimension $d$ of $\M$ and the type $t$ of the semigroup. Furthermore
they give under the heading base the 
structure of $\M$ in the cases we have been able to determine it; the entries 
indicating 
the different possibilities are discussed below.
Inspection of the tables shows that the main parameters
governing the structure of $\M$ are the number of generators of $\NN$
and the type $t$.
The first step in our computations is always to find equations 
for the monomial curve $C_\NN$, followed by the free resolution. This gives the type 
$t$. The next step is to find the graded parts of  the vector space $T^1$
of infinitisemal deformations.

\begin{prop}\label{dimprop}
For all semigroups with $g\leq7$ the dimension of $\M$ is given by the lower bound
$2g-2+t-\dim\mathrm{T}^{1,+}$ of Theorem {\rm \ref{Stevens}}.
\end{prop}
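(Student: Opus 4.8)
The plan is to turn the statement into a finite verification and then carry it out case by case with the tools of Section 2. Theorem \ref{Stevens} already supplies the inequality $\dim E \geq 2g-2+t-\dim\mathrm{T}^{1,+}$ for every irreducible component $E$ of a nonempty $\M$, and $\dim\M$ is the maximum of the numbers $\dim E$; so it suffices to establish the reverse inequality $\dim E \leq 2g-2+t-\dim\mathrm{T}^{1,+}$ for each component. When $\NN$ is negatively graded, that is when $\dim\mathrm{T}^{1,+}=0$, the two bounds of Theorem \ref{Stevens} coincide and nothing remains to prove; the real content therefore concerns those semigroups that admit infinitesimal deformations of positive weight. Since the semigroups of genus at most seven are exactly those listed in Tables \ref{tab1} and \ref{tab2}, the problem is genuinely finite.

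For each such $\NN$ I would first produce equations for the monomial curve $C_\NN$ and its free resolution, read off the type $t$ as the rank of the highest syzygy module, and then split $\mathrm{T}^1_{C_\NN}$ into its graded pieces so as to record $\dim\mathrm{T}^{1,+}$. By Pinkham's theorem (Theorem \ref{pinkhamthm}) one has $\M=B^-_s/\mathbb{G}_{m}$, and since $B^-_s$ is open in $B^-$ and the $\mathbb{G}_{m}$-action has one-dimensional orbits, $\dim E$ is one less than the dimension of the corresponding component of $B^-$. The task thus reduces to computing the dimensions of the components of $B^-$. As a guide to what to expect, recall from the discussion after Theorem \ref{pinkhamthm} that a smoothing component of $C_\NN$ has dimension $\mu+t-1=2g+t-1$: the upper bound $2g-2+t$ is attained exactly when such a component lies entirely in negative weight, whereas the lower bound corresponds to its tangent space absorbing all of $\mathrm{T}^{1,+}$. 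The assertion is that the latter situation always occurs for $g\leq 7$, and verifying this amounts to an honest computation of $B^-$.

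To carry out that computation I would use whichever of the three methods of Section 2 is feasible for the curve at hand: the standard lifting of the resolution when the codimension and the number of equations are small, Hauser's algorithm when a direct \textsc{Singular} computation is too slow, and the projection method of De Jong--Van Straten in the one case where it is most convenient. In every instance $B^-$ is cut out by polynomial equations, so once these are in hand the dimension of each component of $B^-$, hence of each component of $\M$, can be read off and compared with the lower bound $2g-2+t-\dim\mathrm{T}^{1,+}$.

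The hard part will be the semigroups of codimension four with ten defining equations, where the negative weight base is typically given by some twenty equations whose precise shape depends on the curve. Here the standard approach times out and, after eliminating the many linearly occurring variables, the surviving equations carry so many monomials that they resist direct inspection; one must instead fall back on Hauser's algorithm, or in one instance on the projection method, and extract the dimension count carefully rather than trust a single mechanical run. It is exactly because only the dimension of $\M$, and not its full structure, is needed in these cases that the redundancy of having several independent methods makes the verification tractable.
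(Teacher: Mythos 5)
Your skeleton does match the paper's: reduction to the finite lists of Tables \ref{tab1} and \ref{tab2} (with nonemptiness supplied by Komeda's result quoted in the introduction), immediate disposal of the negatively graded semigroups because the two bounds of Theorem \ref{Stevens} then coincide, and case-by-case computation by the three methods of Section 2. But there are two genuine gaps. First, your reduction ``the task thus reduces to computing the dimensions of the components of $B^-$'' is wrong as stated: Pinkham's theorem (Theorem \ref{pinkhamthm}) quotients $B^-_s$, not $B^-$, and a component of $B^-$ may contain no point with smooth fibre at all, in which case it contributes nothing to $\M$. This actually happens in the list: for $N(7)_{10}=\langle 4,6,13,15\rangle$ the negative-weight base has two components of different dimensions, cut out by $T_{11}=T_{13}=T_6T_{22}-T_8T_{20}=0$ and by $T_6=T_8=0$, and the rolling-factors description of the total space shows that every fibre over the \emph{larger} component is singular at the origin, so only the smaller component is a smoothing component. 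Taking the maximal component of $B^-$ would overestimate $\dim\M$ by one (giving $13$ instead of $12=2g-2+t-\dim\mathrm{T}^{1,+}$), i.e.\ your procedure would refute rather than verify the proposition in precisely the case where Haure's classical count also disagrees. So besides dimensions you must decide, component by component, which parts of $B^-$ carry smooth fibres.

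Second, for the hardest positively graded cases your fallback --- compute $B^-$ in full with Hauser's algorithm and read off dimensions --- is not what the paper does, and by its own account is not practicable: for $N(7)_{13}$, $N(7)_{27}$ and $N(7)_{23}$ the base space was never computed in full. The missing idea is a truncation argument (Proposition \ref{propdim} and the last subsection): compute the versal deformation only up to order two; the resulting quadratic equations are among the equations of the tangent cone of $B^-$ at the origin; since the $\mathbb{G}_m$-action with positive weights contracts $B^-$ to the origin, every component passes through it, so $\dim B^-$ equals the dimension of its tangent cone, which is bounded above by the dimension of the scheme cut out by those quadrics. When this upper bound meets the lower bound of Theorem \ref{Stevens} one is done without ever knowing $B^-$, and this is how $N(7)_{13}$, $N(7)_{27}$ and $N(7)_{23}$ (the last in finite characteristic) are settled. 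Even in the one hard case where a full Hauser computation was pushed through, $N(7)_{24}$, the dimension is extracted from the lowest-degree parts of the $21$ unwieldy equations by the same initial-form argument. Without this ingredient your program stalls exactly on the cases that are, by your own reduction, the real content of the proposition; your closing remark that ``only the dimension is needed'' points in the right direction but does not supply the mechanism.
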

\noindent This result follows from the computations discussed in the rest of this section.

\begingroup
\small
\begin{table}[htb]
\begin{center}
\begin{tabular}{llccc||llccc}
\cite{Na08}&  semigroup& $d$& $t$ & base&\cite{Na08}& semigroup& $d$& $t$ & base\\ \hline \hline
$N(1)_{1} $ & 2,3 &  1  & 1 & sm &
$N(6)_{1} $ & 2,13 &11 & 1  & sm\\  

$N(2)_{1} $ & 2,5 &  3 & 1 & sm &   
$N(6)_{2} $ & 3,10,11 & 12 & 2 &sm \\  
$N(2)_{2} $ & 3,4,5 &  4 & 2 & sm & 
$N(6)_{3} $ & 3,8,13 & 11 & 2 & sm \\  

$N(3)_{1} $ & 2,7 & 5 & 1 & sm &  
$N(6)_{4} $ & 3,7 & 10 & 1 & sm \\  
$N(3)_{2} $ & 3,5,7 &  6& 2 & sm&   
$N(6)_{5} $ & 4,9,10,11& 13 & 3& $B_2$\\  
$N(3)_{3} $ & 3,4 & 5 & 1 &sm &     
$N(6)_{6} $ & 4,7,10,13 & 12 & 3 & $B_1$ \\ 
$N(3)_{4} $ & 4,5,6,7& 7 & 3 & $B_1$& 
$N(6)_{7} $ & 4,7,9 & 11 &2 & sm \\  

$N(4)_{1} $ & 2,9 &  7 & 1 & sm&  
$N(6)_{8} $ & 4,6,11,13&  11 & 3 &$B_1^*$ \\  
$N(4)_{2} $ & 3,7,8 &8 & 2 & sm&  
$N(6)_{9} $ & 4,6,9 &  10 & 1 &sm \\   
$N(4)_{3} $ & 3,5&  7 & 1 &sm &  
$N(6)_{10} $ & 4,5 & 10 & 1 & sm \\  
$N(4)_{4} $ & 4,6,7,9 & 9 &3 & $B_1$ &  
$N(6)_{11} $ & 5,8,9,11,12 &  14 & 4 & ? \\  
$N(4)_{5} $ & 4,5,7 & 8 & 2 &sm & 
$N(6)_{12} $ & 5,7,9,11,13 &   13  & 4 & ! \\  
$N(4)_{6} $ & 4,5,6 &  7 & 1 &sm &  
$N(6)_{13} $ & 5,7,8,11 & 12 & 3 & $B_1$  \\  
$N(4)_{7} $ & 5,6,7,8,9 & 10 & 4 & !& 
$N(6)_{14} $ & 5,7,8,9 & 11 & 1 & sm \\ 
\cline{1-5}
$N(5)_{1} $ & 2,11 & 9 & 1 & sm& 
$N(6)_{15} $ & 5,6,9,13 & 12 &3 & $B_1$  \\  
$N(5)_{2} $ & 3,8,10 &  10 &  2 & sm &  
$N(6)_{16} $ & 5,6,8 &  11 & 2 & sm \\ 
$N(5)_{3} $ & $3,7,11$ &  9 &  1 & sm &  
$N(6)_{17} $ & 5,6,7 & 10 & 2 & sm \\ 
$N(5)_{4} $ & 4,7,9,10 & 11  & 3 & $B_1$  & 
$N(6)_{18} $ & 6,8,9,10,11,13& 15 & 5 &? \\ 
$N(5)_{5} $ & 4,6,9,11 & 10  & 3 & $B_1$  & 
$N(6)_{19} $ & 6,7,9,10,11 & 14 & 4  & ? \\  
$N(5)_{6} $ & 4,6,7 & 9&  1 & sm &  
$N(6)_{20} $ & 6,7,8,10,11& 13 & 3 & $G'$ \\   
$N(5)_{7} $ & 4,5,11 & 9 & 2 & sm &  
$N(6)_{21} $ & 6,7,8,9,11& 12 & 2 & $G$ \\  
$N(5)_{8} $ & 5,7,8,9,11 & 12 & 4& ?&  
$N(6)_{22} $ & 6,7,8,9,10&  11 & 1 & $G$\\  
$N(5)_{9} $ & 5,6,8,9 &11 & 3 & $B_1$  & 
$N(6)_{23}  $ & $7, \dots,13$ &16 & 6  & ? \\  
$N(5)_{10} $ & 5,6,7,9 & 10 & 2& sm \\  
$N(5)_{11} $ & 5,6,7,8 &  9 & 1 & sm \\  
$N(5)_{12} $ & $6, \dots,\!11$ & 13 &5 & ? \\  
\hline

\end{tabular}
\caption{semigroups of genus $\leq 6$}
\label{tab1}
\end{center}
\end{table}
\endgroup
\subsection{Smooth base space}
The base space of the versal deformation of $C_\NN$ is smooth (indicated with
\lq sm\rq\  in the tables) if
the obstruction space $T^2$ vanishes. This happens if the curve is
a complete intersection, or Gorenstein of codimension three, or 
Cohen-Macaulay of codimension two. In the latter case the equations are the vanishing
the minors of a $2\times 3$ matrix, and $t=2$. Also $T^2=0$ for codimension 3 
curves with $t=2$; two of these curves, $N(7)_{31}$ and
$N(7)_{32}$ are almost complete intersections.

\begingroup
\small
\begin{table}[htb]
\begin{center}
\begin{tabular}{l>{\footnotesize}lccc||l>{\footnotesize}lccc}
name& \small semigroup& $d$& $t$ & base&name& \small semigroup& $d$& $t$ & base\\ \hline \hline
$N(7)_{1} $ & 2,15 &13 & 1  & sm & 
$N(7)_{21} $ & 5,6,9 &12 & 2 & sm\\  
$N(7)_{2} $ & 3,11,13 &14 & 2  & sm&  
$N(7)_{22} $ & 6,9,10,11,13,14&17 & 5 & ?\\  
$N(7)_{3} $ & 3,10,14 &13 & 2  & sm&  
$N(7)_{23} $ & 6,8,10,11,13,15 &16 & 5 & ?\\  
$N(7)_{4} $ & 3,8 &12 & 1  & sm&  
$N(7)_{24} $ & 6,8,9,11,13 &15 & 4 & ?\\  

$N(7)_{5} $ & 4,10,11,13 &15 & 3 & $B_2$&  
$N(7)_{25} $ & 6,8,9,10,13 &14 & 2  & $G'$\\  
$N(7)_{6} $ & 4,9,11,14 &14 & 3 & $B_1$&  
$N(7)_{26} $ & 6,8,9,10,11 &13 & 1  & $G$\\  
$N(7)_{7} $ & 4,9,10,15 &13 & 3 & $B_1$&  
$N(7)_{27} $ & 6,7,10,11,15&15 & 4 & ?\\  

$N(7)_{8} $ & 4,7,13 &13 & 2  & sm&  
$N(7)_{28} $ & 6,7,9,11 &14 & 3 & $B_1$\\  
$N(7)_{9} $ & 4,7,10 &12 & 1  & sm&  
$N(7)_{29} $ & 6,7,9,10 &13 & 3 & $B_1$\\  
$N(7)_{10} $ & 4,9,11,14 &12 & 3 & $B_1^*$&  
$N(7)_{30} $ & 6,7,8,11 &13 & 3  & $B_1$\\  
$N(7)_{11} $ & 4,6,11 &10 & 1 & sm&  
$N(7)_{31} $ & 6,7,8,10 &12 & 2  & sm \\  

$N(7)_{12} $ & 5,9,11,12,13 &16 & 4 & ?&  
$N(7)_{32} $ & 6,7,8,9 &11 & 2 & sm\\  
$N(7)_{13} $ & 5,8,11,12,14 &15 & 4 & ?&  
$N(7)_{33} $ & $7,9,\dots,13,15$&18 & 6 & ?\\  
$N(7)_{14} $ & 5,8,9,12 &14 & 3 & $B_1$&  
$N(7)_{34} $ & 7,8,10,11,12,13 &17 & 5 & ?\\  
$N(7)_{15} $ & 5,8,9,11 &13 & 2  & sm&  
$N(7)_{35} $ & 7,8,9,11,12,13 &16 & 4 & ?\\  

$N(7)_{16} $ & 5,7,11,13 &14 & 3 & $B_1$&  
$N(7)_{36} $ & 7,8,9,10,12,13 &15 & 3  & ?\\  
$N(7)_{17} $ & 5,7,9,13 &13 & 2 & sm&  
$N(7)_{37} $ & 7,8,9,10,11,13 &14 & 2  & ? \\  
$N(7)_{18} $ & 5,7,9,11 &12 & 1 & sm&  

$N(7)_{38} $ & 7,8,9,10,11,12 &13 & 1 & ?\\  
$N(7)_{19} $ & 5,7,8 &12 & 2  & sm&  

$N(7)_{39} $ & $8,\dots,15$ &19 & 7 & ?\\  

$N(7)_{20} $ & 5,6,13,14 &13 & 3  & $B_1$  
\\ \hline

\end{tabular}
\caption{semigroups of genus $7$}
\label{tab2}
\end{center}
\end{table}
\endgroup

\subsection{Cone over a Segre embedding}
For codimension 3 curves $C_\NN$ with  $t=3$ and $g\leq7$  it is possible
to explicitly determine the structure of the base space.
Most cases with $g\leq 6$ were computed by Nakano \cite{Na08}. 

The result for $N(6)_{6}=\langle 4,7,10,13\rangle $ was not given in \cite{Na08}.
We computed the deformation using Bernd Martin's \textsc{Singular} \cite{DGPS}
package
\texttt{deform.lib} \cite{Ma}.
The equations for the curve are determinantal:
\[
\begin{bmatrix}
x & \y(7) & \y(10) & \y(13) \\
 \y(7) & \y(10) & \y(13) & x^4
\end{bmatrix}
\]
The speed of the computation in \textsc{Singular} depends very much on the
chosen ordering. A good choose is using the variables
$( \y(13) , \y(10) , \y(7), x)$ in this order with graded reverse lexicographic
order, but with weights of the variables all equal to 1, not using the weights
$13,10,7,4$.
\textsc{Singular} returns an ideal $Js$ in 16 variables $A,\dots,P$ of weight
$2$, $6$, $10$, $1$, $5$, $9$, $13$, $8$, $12$, $16$, $7$, $10$, $4$, $1$, $4$, $7$.
It is generated by the minors of
\begin{equation}
\label{matrixeq}
\begin{bmatrix}
N & -K+P\\
O & C-L-BM+AM^2\\
P & 2G-2FM+2EM^2-2DM^3-M^3N\\
C-BM+AM^2 &-J+IM-HM^2-M^4+M^3O
\end{bmatrix}
\end{equation}
We are allowed to simplify the equations of the base space by a coordinate
transformation.
An obvious transformation gives the matrix of
the cone over the Segre embedding of $\P^1\times\P^3$.

Observe that the coordinate ring of the Segre cone has 
a resolution of the form
$$
0\longleftarrow  S/I \longleftarrow S 
\mapleft f S^6 \mapleft r S^8 \mapleft s S^3 \longleftarrow 0
$$
where $f$ is the row vector of minors of the matrix
\[
\begin{bmatrix}
X_1 & X_2 & X_3 & X_4 \\
Y_1 & Y_2 & Y_3 & Y_4
\end{bmatrix}
\]
and the $8\times3$ matrix $s$ is the transpose of a matrix of the form
\[
\begin{bmatrix}
X_1 & X_2 & X_3 & X_4 & 0 &0 & 0 & 0\\
Y_1 & Y_2 & Y_3 & Y_4 & X_1 & X_2 & X_3 & X_4\\ 
0 &0 & 0 & 0& Y_1 & Y_2 & Y_3 &Y_4
\end{bmatrix}
\]
Computing the resolution of the ideal $Js$ with \textsc{Singular}
gives indeed a $8\times3$ matrix with some zeroes,
but of course not exactly in the form above. This form
can be achieved by column and row operations; in this way the matrix 
\eqref{matrixeq} was found.

The Segre cone  occurs for many curves as base space. A necessary condition is that 
$\dim T^2=6$. For some curves $\dim T^2 = 12$. This happens for
the semigroups $\langle n_1,n_2,n_3,n_4\rangle$ in the tables with $n_2>2 n_1$.
Then the base space has a more complicated structure. 

\begin{prop}
For monomial curve singularity of genus at most $7$ of codimension $3$, with type
$t=3$, such that the first blow up has
lower embedding dimension, the base space of negative weight is up to a smooth factor the
cone over the Segre embedding of\/ $\P^1\times\P^3$,
except in the cases $N(6)_{8} $ and $N(7)_{10} $, where it has two components,
being the intersection of the Segre cone with  coordinate hyperplanes {\rm(}$B_1^*$ in
Tables {\rm \ref{tab1}} and {\rm\ref{tab2}}{\rm)}.
\end{prop}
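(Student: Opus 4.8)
The plan is to treat all these curves simultaneously through the shape of their minimal free resolution, taking the explicit computation for $N(6)_6=\langle 4,7,10,13\rangle$ and the matrix \eqref{matrixeq} as the model to be reproduced. First I would establish that each such $C_\NN$ has a resolution
\[
0\longleftarrow S/I \longleftarrow S \mapleft f S^6 \mapleft r S^8 \mapleft s S^3 \longleftarrow 0
\]
whose second-syzygy matrix $s$ is, after row and column operations, the transpose of the scroll matrix built from two $4$-tuples $(X_1,\dots,X_4)$ and $(Y_1,\dots,Y_4)$ exactly as displayed above for the Segre cone. That $t=3$ and $\dim T^2=6$ is then immediate. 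The role of the blow-up hypothesis is precisely to force this scroll pattern on the eight first syzygies: when the first blow-up drops the embedding dimension the generators of $\NN$ above the multiplicity $n_1$ line up so that the relations organise into the Eagon--Northcott/Buchsbaum--Rim shape, whereas the excluded $B_2$ semigroups --- those with $n_2>2n_1$, for which the blow-up preserves the embedding dimension --- give $\dim T^2=12$ and a different $s$. Note that the curves need not themselves be determinantal: the relation weights of $N(4)_4=\langle 4,6,7,9\rangle$ cannot be the pairwise sums of four integers, so it is genuinely the syzygy matrix $s$, not the ideal $I$, that carries the scroll structure.

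Second, I would run the deformation computation of negative weight by the standard lifting of $fr=0$ to $FR=0$ (equivalently Hauser's algorithm). The primary obstruction map $\mathrm{Sym}^2 T^{1,-}\to T^2$ is computed from the syzygy matrices $r$ and $s$; because $s$ is the scroll matrix, its six component equations become, after the same linear coordinate change that produced \eqref{matrixeq}, the $2\times2$ minors of a $2\times4$ matrix in the negative-weight deformation parameters --- the defining equations of the cone over the Segre embedding of $\P^1\times\P^3$.

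Third, to conclude that no further equations survive I would invoke the dimension count. By Theorem \ref{Stevens} and Proposition \ref{dimprop} the base has the expected dimension $\dim B^- = \dim T^{1,-}-3$, which is exactly the codimension of the Segre cone. Since the six minors already cut out a locus of codimension three containing $B^-$, equality of dimensions forces $B^-$ to be that locus, that is, the Segre cone times the smooth factor coming from the unobstructed negative-weight directions. For $N(6)_6$ this reads $16-3=13=d+1$ with smooth factor $\A^8$, matching the resolution of $Js$ found in \textsc{Singular}.

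Finally, the two exceptions. For $N(6)_8$ and $N(7)_{10}$ the distribution of weights leaves two of the eight matrix slots without any deformation parameter of the requisite negative weight, so after normalisation one entry of the $2\times4$ matrix is forced to vanish. Intersecting the rank-$\le1$ locus with a coordinate hyperplane $\{X_1=0\}$ splits it into the two components $\{Y_1=0\}$ and $\{X_2=X_3=X_4=0\}$, which is the reducible base $B_1^*$. The hard part will be this last weight bookkeeping together with the uniform verification in the first step: since the weights of the generators vary from semigroup to semigroup, one must check, for each of the finitely many semigroups meeting the hypotheses, that $s$ really does reduce to the scroll form and that every matrix slot admits a deformation parameter of the correct weight --- and that this fails for exactly two slots in precisely the two exceptional cases. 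Everything else is then formal.
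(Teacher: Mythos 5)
Your overall skeleton --- show the quadratic parts of the base equations cut out a Segre cone, then use a dimension bound to conclude --- parallels the paper, but the bridge between those two steps is exactly where your argument breaks, in two ways. First, the claim that ``the six minors already cut out a locus of codimension three containing $B^-$'' is false: the quadratic truncations of the base equations are not elements of the ideal of $B^-$ (compare $x^2-y^3$: the curve is not contained in $V(x^2)$), so no containment, and hence no dimension comparison between $B^-$ and the Segre cone, is available at that level. The correct statement is only that the \emph{tangent cone} of $B^-$ is contained in the cone defined by the lowest-degree parts. The paper's argument runs: the tangent cone sits inside the Segre cone; if the inclusion were proper its dimension would be smaller, contradicting the lower bound of Theorem~\ref{Stevens}; so the tangent cone \emph{is} the Segre cone; and then --- this is the step you are missing entirely --- since the Segre cone is \emph{rigid} and the graded base space is a deformation of its own tangent cone, $B^-$ is isomorphic to it. Without the rigidity argument, equality of tangent cones does not give you $B^-$ up to a smooth factor. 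Second, your appeal to Proposition~\ref{dimprop} is circular: that proposition is established \emph{by} the computations that include the present one, so only the lower bound of Theorem~\ref{Stevens} may be invoked here --- which, fortunately, is all the paper's version of the argument needs.

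Two secondary points. Your first step asserts that the scroll shape of the second-syzygy matrix $s$ forces the primary obstructions to be the $2\times4$ minors after a linear change of coordinates; this is not justified and is not how the paper proceeds --- there the quadratic parts are verified case by case (Nakano's published generators for $g\le 6$, degree-$2$ \textsc{Singular} computations for the $g=7$ cases), and $\dim T^2=6$ is likewise a computed input rather than ``immediate'' from the resolution. Your weight bookkeeping for the exceptions is also off in detail: for $N(6)_8$ only \emph{one} matrix slot, $T_{-1}$, carries a positive-weight variable (two slots vanish only for $N(7)_{10}$, where the components accordingly have different dimensions), and since your tangent-cone-plus-rigidity route cannot apply to these reducible negative-weight loci, the paper must instead compute the versal deformation of $N(6)_8$ in \emph{all} degrees and, for $N(7)_{10}$, exhibit the total space explicitly in rolling factors format. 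Your description of the splitting of the Segre cone along a coordinate hyperplane into $\{Y_1=0,\ \mathrm{rank}\le 1\}$ and $\{X_2=X_3=X_4=0\}$ is correct, but it needs that full computation behind it to be a proof.
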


\begin{proof}
By the  assumption in the statement $\dim T^2=6$.
Most cases with $g\leq 6$ were computed by Nakano \cite{Na08}. 
It can be checked that systems of generators 
given in \cite{Na08} are minors of $2\times4$ matrices.

To identify the base space as Segre cone it in fact suffices to 
show that the quadratic part of the equations defines a Segre cone.
If it does, the Segre cone has to be the tangent cone of the base 
space, for otherwise the dimension of the tangent cone should be 
less than the dimension of the Segre cone, but this dimension
is in all cases
equal to the lower bound of theorem \ref{Stevens}. Because the Segre cone is rigid
and the base space itself is a deformation of its tangent cone, they are
isomorphic.

For the semigroups with $4$ generators and $t=3$ of genus 
$g=7$ (see Table \ref{tab2}) the versal deformation 
up to degree 2 is easily computed with \textsc{Singular}, and 
the cases where the base space is $B_1$ identified.

For $N(6)_8=\langle 4,6,11,13\rangle$ the versal deformation
in all degrees can be computed. The generators of the ideal of the curve are the
$2\times2$ minors of the symmetric matrix
\[
\begin{bmatrix}
x & \y(6) & \y(11)  \\
\y(6) & x^2 & \y(13) \\
\y(11) & \y(13) & x^3\y(6)
\end{bmatrix}
\]
With these generators and the graded reverse lexicographic order
with variables $(\y(13),\y(11),\y(6),x)$ of weights $(13,11,6,4)$
\textsc{Singular} succeeds in computing rather quickly the versal deformation
in all degrees. Replacing the generator $\y(11)\y(13)-x^3\y(6)^2$ by
$\y(11)\y(13)-x^6$ results in a computation which does not finish in 
reasonable time.
After a coordinate transformation the base space is given by the minors of 
\[
\begin{bmatrix}
T_{-1} & T_1 & T_6 & T_8 \\
T_{9} & T_{11} & T_{16} & T_{18}
\end{bmatrix}
\]
where the indices indicate the weight of the deformation variables.
The base space is again a Segre cone, but the base space in negative
weight lies in the hyperplane $T_{-1}=0$ and consists of two components,
one smooth given by $T_1 = T_6 = T_8=0$ and the other by $T_9=0$ and the 
vanishing of the three minors of the matrix not involving $T_9$.
Note that the last generator of the ideal of the second component 
given in \cite[p.159]{Na08} can be expressed in the previous ones.

For  $N(7)_{10}=\langle 4,6,13,15\rangle$ the quadratic part of the equations for the
bases space are the minors of 
\[
\begin{bmatrix}
T_{-3} & T_{-1} & T_6 & T_8 \\
T_{11} & T_{13} & T_{20} & T_{22}
\end{bmatrix}
\]
and in negative weight there are two components of different dimension,
given by $T_{11} =T_{13} =T_6T_{22}-T_8T_{20}=0$ and $T_6=T_8=0$.
Over the largest component the equations of the total space can be written in
rolling factors format (see e.g. \cite[p. 95]{Ste03}):
three equations are the minors of the matrix
\[
\begin{bmatrix}
x & \y(6) + T_2 x & \y(13)  \\
\y(6) & x^2 +T_4 x& \y(15) 
\end{bmatrix}
\]
while the fifth and sixth equations are obtained by replacing  in each monomial
a factor occurring in the top row of the matrix by one of the bottom row.
From the equations in the matrix one finds
that $x(x\y(13) +T_4 \y(13))=\y(6)(\y(15)+T_2\y(13))$,  so $\y(15)+T_2\y(13)$ rolls
to $x\y(13) +T_4 \y(13)$. This gives:
\begin{alignat*}{2}
x^2\y(6)^3&-\y(13)^2&&+P_{22}x+P_{20}\y(6)+T_{13}\y(13)+T_{11}\y(15)\\
x\y(6)^4&-\y(13)\y(15)&&+P_{22}\y(6) +\dots\\
\y(6)^5&-\y(15)^2&&+P_{22}(x^2 +T_4 x-T_2\y(6))+\dots
\end{alignat*}
Here $P_{22}$ and $P_{20}$ are polynomials containing deformation
variables of degree $4,6,\dots, 22$. 
It follows in particular that the origin is a singular point of all fibres, in general
an ordinary double point. Only the smallest component is a smoothing component.
\end{proof}

\begin{remark}
For $N(7)_{10}$ the gap sequence is $1,2,3,5,7,9,11$ and Haure
\cite{Hau96} gives a plane model of degree 13 with 11 moduli, whereas 
$\dim \M = 12$. This is the only case where Haure's result differs from our result.
\end{remark}

\subsection{Curves with first blow-up of multiplicity four}
For the cases $N(6)_{5} =\langle 4, 9, 10, 11 \rangle$
and  $N(6)_{5} =\langle 4, 10, 11,13 \rangle$ the first blow-up
is $N(3)_{4} =\langle 4, 5, 6, 7 \rangle$ and $N(4)_{4} =\langle 4, 6, 7,9 \rangle$
respectively.
For the first curve we  compute the base space with Hauser's algorithm; we do it in fact
for all semigroups $\langle 4,1+4\tau ,2+4\tau ,3+4\tau  \rangle$. A similar, but more complicated computation is in \cite{Co21}.

The equations of the curve are given by the minors of the matrix
\[
\begin{bmatrix}
x^\tau &    \y(1)& \y(2)& \y(3)\\
\y(1)& \y(2)& \y(3)& x^{\tau +1}
\end{bmatrix}
\]
We write the unfolding with variables which are polynomials in $x$,
where $\f(i)(j)$ with $i\neq j$ has degree $4\tau +j$. We use coordinate transformations
to remove as many terms as possible. The result is
\begin{align*}
\y(1)^2-\y(2)x^\tau         &+\f(2)(1)\y(1)+\f(2)(2)+\f(2)(-1)\y(3)+\f(2)(0)\y(2)\\
\y(1)\y(2)-\y(3)x^\tau      &+\f(3)(1)\y(2)+\f(3)(2)\y(1)+\f(3)(3)+\f(3)(0)\y(3)\\
\y(1)\y(3)-x^{2\tau +1}                 &+\f(4)(2)\y(2)+\f(4)(3)\y(1)+\f(4)(4)\\
\y(2)^2-x^{2\tau +1}      & +\g(4)(1)\y(3)+\g(4)(2)\y(2)+\g(4)(3)\y(1)+\g(4)(4)\\
\y(2)\y(3)-\y(1)x^{\tau +1} &+\f(5)(5)                          +\f(5)(4)\y(1)\\
\y(3)^2-\y(2)x^{\tau +1}     & +\f(6)(5)\y(1)+\f(6)(6)+\f(6)(3)\y(3)+\f(6)(4)\y(2)
\end{align*}
We have  four transformations left, which we cannot show in the above notation.
They act on the unfolding as 
$\f(3)(1)-x^\tau a_{3,1}$,  $\f(2)(1)-x^\tau a_{2,1}$, $\f(4)(2)+x^\tau a_{3,2}$
and $\f(2)(0)+\tau a_0x^{\tau -1}$; we use them to remove the 
lowest weight variables from  $\f(2)(0)$, $\f(2)(1)$,  $\f(3)(1)$ and  $\f(4)(2)$.

We proceed as explained in section \ref{hauser}: 
we compute the relation matrix for the unperturbed
generators of the ideal, multiply with the perturbed generators and reduce the result 
with them. The result does not contain quadratic monomials in the $\y(i)$ and for flatness
it has to vanish identically, giving conditions on the coefficients.
We write these as equations for the polynomials $\f(i)(j)$, $\g(i)(j)$.
The polynomials $\f(i)(i)$  and $\g(4)(4)$ can be eliminated. We obtain 	fifteen equations.

The first one is 
$(x^\tau -\f(3)(0))(\f(2)(1)-\f(3)(1))+
(x^\tau -\f(2)(0))\g(4)(1)+\f(2)(-1)\f(3)(2)=0$.
We will use this equation to eliminate $\g(4)(1)$. To this end we rewrite
it, and do the same with five other equations containing $x^\tau -\f(2)(0)$. We obtain
\begin{multline*}
(x^\tau -\f(2)(0))(\g(4)(1)+\f(2)(1)-\f(3)(1))=
{ -(\f(2)(0)-\f(3)(0))(\f(2)(1)-\f(3)(1))-\f(2)(-1)\f(3)(2)}\\
\shoveleft{(x^\tau -\f(2)(0))(\f(3)(2)-\g(4)(2))=
    -(\f(2)(1)-\f(3)(1))\f(3)(1)-\f(2)(-1)(\f(4)(3)-\f(6)(3))}\\
\shoveleft{(x^\tau -\f(2)(0))(\g(4)(3)-\f(4)(3)+\f(6)(3)) =
  (\f(2)(0)-\f(3)(0))(\f(4)(3)-\f(6)(3))-\f(3)(1)\f(3)(2)}\\
\shoveleft{(x^\tau -\f(2)(0))(\f(4)(3)-x\f(2)(-1)) =  (x\f(2)(0)-\f(6)(4))\f(2)(-1)-(\f(2)(1)-\f(3)(1))\f(4)(2)}\\
\shoveleft{(x^\tau -\f(2)(0))(\f(6)(4)-\f(5)(4)-x\f(2)(0)+x\f(3)(0)) }\\
   \shoveright{ =\f(3)(2)\f(4)(2)+(\f(2)(0)-\f(3)(0))(x\f(2)(0)-\f(6)(4))}\\
\shoveleft{(x^\tau -\f(2)(0))(\f(6)(5)+x\f(3)(1))=
 - (x\f(2)(0)-\f(6)(4))\f(3)(1)-\f(4)(2)(\f(4)(3)-\f(6)(3))}
  \end{multline*}
It can be checked that the remaining equations are consequences of these ones.
All the above equations are of the form
\[
L\cdot (x^\tau - \f(2)(0))=R
\]
with $L$ and $R$ polynomials in $x$ satisfying
$\deg_x(R)\leq \deg_x(L)+t$. Division with remainder gives
$R=Q (x^\tau - \f(2)(0))+\overline R$, and therefore we can solve $L=Q$ and
find the coefficients of $\overline R$ as equations for the base space. 
In other words, the condition leading to the equations of the base
space is that the right hand side of the above equations is divisible by
$x^\tau -\f(2)(0)$. A similar structure first appeared for the base spaces of rational 
surface singularities of multiplicity four \cite{td3}.

The right hand side of the equations are the minors of the matrix
\[
\begin{bmatrix}
-\f(2)(-1)          & (\f(2)(0)-\f(3)(0))  &\f(3)(1)             & \f(4)(2)\\
(\f(2)(1)-\f(3)(1))  & \f(3)(2)           & (\f(4)(3)-\f(6)(3))   & -(x\f(2)(0)-\f(6)(4))
\end{bmatrix}
\]
It seems that the eliminated variable $\f(4)(3)$ occurs in the matrix, but we can take 
$\f(4)(3)-\f(6)(3)$ as independent variable.

We make the divisibility conditions explicit for $\tau =1$ ($N(3)_4)$) and $\tau =2$
($N(6)_5$).
The $\f(i)(j)$ are  polynomials in $x$, of the form
$\f(i)(j)= \ff(i)(j+4\tau )+\ff(i)(j+4\tau -4)x + \dots + \ff(i)(r)x^{k}$ if $j+4\tau =4k+r$ with $1\leq r\leq 4$.
Recall that we removed the variables of lowest
weight in $\f(2)(0)$, $\f(2)(1)$, $\f(3)(1)$ and $\f(4)(2)$.

For $\tau =1$ the matrix becomes
\[
\begin{bmatrix}
 -\ff(2)(3)   & -\ff(3)(4)  &        \ff(3)(5)      &        \ff(4)(6) \\
\ff(2)(5)-\ff(3)(5) & \ff(3)(6)+\ff(3)(2)x &    \ff(4)(7)-\ff(6)(7)+  (\ff(4)(3)-\ff(6)(3))x &  \ff(6)(8)+\ff(6)(4)x
\end{bmatrix}
\]
and the condition that the minors are divisible by $x$ is obviously 
that they  vanish when $x=0$ is substituted. Therefore the base space is given by the 
vanishing of the minors of
\[
\begin{bmatrix}
 -\ff(2)(3)   & -\ff(3)(4) &        \ff(3)(5)      &        \ff(4)(6) \\
\ff(2)(5)-\ff(3)(5) & \ff(3)(6) &    -\ff(6)(4)\ff(2)(3)-\ff(6)(7) &  \ff(6)(8)
\end{bmatrix}
\]
where we substituted the value for $\ff(4)(7)$.
Note that the variables $\ff(6)(4)$, $\ff(6)(3)$ and $\ff(3)(2)$ do not occur 
in the equations. We recover the result that the base space for $N(3)_{4} $
is the Segre cone.

For  $\tau =2$ the last entry of matrix becomes $\ff(6)(12)+\ff(6)(8)x+\ff(6)(4)x^2-\ff(2)(8)x$.
We apply division with remainder by $x^2-\ff(2)(8)$, leading to
$\ff(6)(12)+\ff(6)(4)\ff(2)(8)+(\ff(6)(8)-\ff(2)(8))x$.
Doing the same for other entries we obtain the transpose of the  matrix
\[
\begin{bmatrix}
 -\ff(2)(7)-\ff(2)(3)x   & \ff(2)(9)-\ff(3)(9)+(\ff(2)(5)-\ff(3)(5))x \\
 \ff(2)(8)-\ff(3)(8) -\ff(3)(4) x &     \ff(3)(10)+\ff(3)(2)\ff(2)(8)+\ff(3)(6)x\\
 \ff(3)(9)+\ff(3)(5)x      &      
 \ff(4)(11)-\ff(6)(11)+ (\ff(4)(3)-\ff(6)(3))\ff(2)(8)+(\ff(4)(7)-\ff(6)(7))x\\
 \ff(4)(10)+\ff(4)(6)x &   
  \ff(6)(12)+\ff(6)(4)\ff(2)(8)+(\ff(6)(8)-\ff(2)(8))x
\end{bmatrix}
\]
Making a coordinate transformation and renaming the variables gives a matrix of the form
\[
\begin{bmatrix}
 \a(7)+\a(3)x   & \a(8)+\a(4) x &        \a(9)+\a(5)x      &        \a(10)+\a(6)x \\
\b(9)+\b(5)x & \b(10)+\b(6)x &    \b(11)+ \b(7)x & \b(12)+\b(8)x
\end{bmatrix}
\]
Division with remainder and taking the $x$-coefficient leads to two equations from each minor:
\begin{gather*}
\a(i+4)\b(j+2)+\a(i)\b(j+6)- \a(j+4)\b(i+2)+\a(j)\b(i+6)\\
\a(i+4)\b(j+6)+\a(i)\b(j+2)\ff(2)(8)- \a(j+4)\b(i+6)-\a(j)\b(i+2)\ff(2)(8)
\end{gather*}

For $N(7)_5$ a computation of the versal deformation up to order 3
allows to recognise the base to be $B_2$ also in this case.


\subsection{The cone over a Grassmannian}
The semigroup  $N(6)_{22}=\langle 6, 7, 8, 9, 10\rangle$ is the first
of the second family of curves studied in \cite{Co21}.
The computation can also easily be done with \textsc{Singular}.
The result is that the base space is the cone over the 
Grassmannian $G(2,5)$. In Tables \ref{tab1} and \ref{tab2}  this base space
is denote by $G$. This shows that $\mathcal{M}_{6,1}^{\scriptscriptstyle N(6)_{22}}$ is
rational.  Equations for the base space
can be recognised because they are the
Pfaffians of a skew-symmetric $5\times5$ matrix, which is the
relation matrix between the equations. Again, a computer
computation will in general not lead to a skew matrix,
but one can obtain that form by row and column operations.

The curve  $N(7)_{26}=\langle 6, 8, 9, 10,11\rangle$ is also 
Gorenstein and has as base space a cone over  $G(2,5)$.
While  $N(6)_{21}=\langle 6, 7, 8, 9, 11\rangle$, which deforms into
$N(6)_{22}$, is not Gorenstein,
but has type $t=2$, the dimension of 
$T^2$ is also five, and a computation with \textsc{Singular} shows that the base
space has the same structure: it is a cone over the 
Grassmannian.

\subsection{A  codimension four base space}
For $N(6)_{20}=\langle 6, 7, 8, 10, 11\rangle$  ($t=3$) and 
$N(7)_{25}=\langle 6, 8,9,10,13\rangle$ ($t=2$) one has $\dim T^2=9$.
We compute the base spaces with Hauser's algorithm. 
It turns out that they have the same structure, called $G'$ in the tables.
 We give here the
details for the first curve.
An additive basis over $\k[x]$ of the coordinate ring is $(1,\y(7),\y(8),\y(10),\y(11), \y(7)\y(8))$. 
We take the following unfolding of the  generators of the ideal:
{\small
\begin{multline*}
\y(7)^2-\y(8)x+(\ff(14)(1)x+\ff(14)(7))\y(7)+\ff(14)(14)+\ff(14)(3)\y(11)+\ff(14)(4)\y(10)+\ff(14)(6)\y(8)\\
   \shoveleft{ \y(8)^2-\y(10)x+(\ff(16)(2)x+\ff(16)(8))\y(8)+(\ff(16)(3)x+\ff(16)(9))\y(7)}
   \\
  \shoveright{ + \ff(16)(16)+\ff(16)(5)\y(11)+\ff(16)(6)\y(10)}\\
   \shoveleft{ \y(7)\y(10)-\y(11)x+(\ff(17)(1)x+\ff(17)(7))\y(10)+(\ff(17)(3)x+\ff(17)(9))\y(8)}\\
  \shoveright{  +(\ff(17)(4)x+\ff(17)(10))\y(7)+\ff(17)(17)+\ff(17)(6)\y(11)}\\
  \shoveleft{  \y(11)\y(7)-x^3+(\ff(18)(2)x+\ff(18)(8))\y(10)+(\ff(18)(4)x+\ff(18)(10))\y(8)+\ff(18)(18)}\\
 \shoveleft{  \y(8)\y(10)-x^3+(\gg(18)(1)x+\gg(18)(7))\y(11)+(\gg(18)(5)x+\gg(18)(11))\y(7)+\gg(18)(18)}\\
   \shoveleft{ \y(11)\y(8)-\y(7)x^2+\ff(19)(19)+(\ff(19)(2)x+\ff(19)(8))\y(11)+(\ff(19)(3)x+\ff(19)(9))\y(10)}\\
 \shoveright{  + (\ff(19)(5)x+\ff(19)(11))\y(8)+(\ff(19)(6)x+\ff(19)(12))\y(7)}\\
   \shoveleft{ \y(10)^2-\y(8)x^2+(\ff(20)(1)x^2+\ff(20)(7)x+\ff(20)(13))\y(7)+\ff(20)(20)+(\ff(20)(3)x+\ff(20)(9))\y(11)} \\
                          \shoveright{+ (\ff(20)(4)x+\ff(20)(10))\y(10)+\ff(20)(5)\y(7)\y(8)+(\ff(20)(6)x+\ff(20)(12))\y(8)}\\
 \shoveleft{ \y(11)\y(10)-\y(8)\y(7)x+(\ff(21)(1)x^2+\ff(21)(7)x+\ff(21)(13))\y(8)
 +(\ff(21)(5)x+\ff(21)(11))\y(10)}\\ 
 \shoveright{+(\ff(21)(2)x^2+\ff(21)(8)x+ \ff(21)(14))\y(7)+
  (\ff(21)(4)x+\ff(21)(10))\y(11)+\ff(21)(21)}\\
 \shoveleft{ \y(11)^2-\y(10)x^2+\ff(22)(7)\y(7)\y(8)+(\ff(22)(6)x+\ff(22)(12))\y(10)+(\ff(22)(5)x+\ff(22)(11))\y(11)}\\
   +(\ff(22)(3)x^2+\ff(22)(9)x+\ff(22)(15))\y(7)+(\ff(22)(2)x^2+\ff(22)(8)x+\ff(22)(14))\y(8)
         +\ff(22)(22)
\end{multline*}}

\noindent 
This shows the variables involved,
except that the $\ff(i)(i)$ and $\gg(18)(18)$ are polynomials in $x$.
In practice it is easier to first work with the coefficients of the $\y(i)$ as polynomials.
On this level  the variables $\ff(i)(i)$ and $\gg(18)(18)$ can be eliminated. After that step the coefficients of $x$ can be taken.
Most variables can be eliminated. What is left are nine rather long polynomials with in total 134 monomials, 
but on closer inspection a
coordinate transformation can be found, leading to the following generators of the ideal
of the base space:
\begin{gather*}
\ff(18)(8)\ff(20)(5)-\ff(17)(6)\ff(22)(7)\\
\ff(14)(4)\ff(20)(9)-\ff(19)(3)\ff(21)(10)+\gg(18)(7)\ff(22)(6)\\
-\ff(16)(6)\ff(19)(3)\ff(20)(5)+\ff(14)(4)\ff(20)(5)\ff(21)(5)-\gg(18)(7)\ff(22)(7)\\
\ff(18)(8)\gg(18)(7)+\ff(16)(6)\ff(17)(6)\ff(19)(3)-\ff(14)(4)\ff(17)(6)\ff(21)(5)\\
-\ff(16)(8)\gg(18)(7)-\ff(16)(6)\ff(20)(9)+\ff(21)(10)\ff(21)(5)\\
-\ff(16)(8)\ff(19)(3)\ff(20)(5)+\ff(20)(9)\ff(22)(7)+\ff(20)(5)\ff(21)(5)\ff(22)(6)\\
\ff(14)(4)\ff(16)(8)\ff(20)(5)-\ff(21)(10)\ff(22)(7)-\ff(16)(6)\ff(20)(5)\ff(22)(6)\\
-\ff(16)(8)\ff(17)(6)\ff(19)(3)+\ff(18)(8)\ff(20)(9)+\ff(17)(6)\ff(21)(5)\ff(22)(6)\\
\ff(14)(4)\ff(16)(8)\ff(17)(6)-\ff(18)(8)\ff(21)(10)-\ff(16)(6)\ff(17)(6)\ff(22)(6)
\end{gather*}
The singular locus consists of two components,
the $(\ff(20)(5),\ff(17)(6))$-plane and the Segre cone
\[
\begin{bmatrix}
 \ff(21)(5)& \ff(16)(6)&   \ff(16)(8) \\
 \ff(19)(3) &\ff(14)(4)&   \ff(22)(6) 
\end{bmatrix}
\]
the other variables being zero.
If $\ff(20)(5)=1$ then $\ff(18)(8)=\ff(17)(6)\ff(22)(7)$
and the generators reduce to the Pfaffians of the matrix
\[
\begin{bmatrix}
0&        \ff(19)(3)& \ff(14)(4)&  \ff(22)(6)& \ff(22)(7) \\ 
-\ff(19)(3)&0&        -\gg(18)(7)& \ff(20)(9)& -\ff(21)(5)\\
-\ff(14)(4)&\gg(18)(7)& 0&         \ff(21)(10)&-\ff(16)(6)\\
-\ff(22)(6)&-\ff(20)(9)&-\ff(21)(10)&0&        -\ff(16)(8)\\
-\ff(22)(7)&\ff(21)(5)& \ff(16)(6)&  \ff(16)(8)& 0         
\end{bmatrix}
\]
in accordance with the fact that the curve deforms into 
$N(6)_{21}$ and $N(6)_{22}$.

On the other component of the singular locus we take 
$\ff(19)(3)=1$ and find $\ff(16)(6)=\ff(14)(4)\ff(21)(5)$, $\ff(16)(8)=\ff(21)(5)\ff(22)(6)$
while the generators reduce to the minors of
\[
\begin{bmatrix}
\ff(22)(7) & \ff(18)(8)  &\ff(16)(6)-\ff(14)(4)\ff(21)(5) &\ff(16)(8)-\ff(21)(5)\ff(22)(6)\\
\ff(20)(5) &  \ff(17)(6)  &           \gg(18)(7)        &  \ff(20)(9)          
\end{bmatrix}
\]
\subsection{Codimension 4 and type 4}
For most of the semigroups with 5 generators and type 4 in the list
the associated monomial curve has $\dim T^2=20$.
Only for $N(6)_{19} =\langle 6, 7, 9, 10, 11 \rangle$ and 
$N(7)_{24} =\langle 6, 8, 9,  11, 13 \rangle$ one has $\dim T^2=21$,
while $\dim T^2=26$
for $N(7)_{12} =\langle5,9,11,12,13 \rangle$. 
The first two curves deform into $N(6)_{20}$ respectively 
$N(7)_{25}$, which are curves with base space $G'$.
We have not been able to determine the exact structure of the base
space; in the tables this is marked by a question mark (?). Only for two 
cases ($N(4)_{7}$ and $N(6)_{12}$, marked !) we give here explicit equations.
For $N(4)_7$ Nakano
computed the base computing in characteristic $7$ \cite{Na16}.
The versal deformation of the monomial curve with semigroup
$N(4)_{7} =\langle 5, 6, 7, 8, 9  \rangle$
was computed with the projection method in \cite{Ste93}.
This computation also takes  care of $N(6)_{12}$ by the following result.

%
\begin{lemma} The curves $N(4)_{7}$ and $N(6)_{12}$ 
have $I^2$-equivalent plane projections.
\end{lemma}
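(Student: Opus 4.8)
The plan is to make both plane projections explicit and to exhibit a single conductor subscheme $\Sigma\subset\A^2$ carrying both, whose two defining functions differ by an element of $I^2$.

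First I would fix the projections. Both semigroups have multiplicity $5$, so I project each monomial curve onto the plane curve obtained from the coordinate of weight $5$ together with the next generator: $x=t^5,\ y=t^6$ for $N(4)_7=\langle 5,6,7,8,9\rangle$ and $x=t^5,\ y=t^7$ for $N(6)_{12}=\langle 5,7,9,11,13\rangle$. Since $\gcd(5,6)=\gcd(5,7)=1$ the resulting map $X\to Y$ is finite and generically injective, as the projection method requires, and the image curves are $Y_1\colon y^5-x^6=0$ and $Y_2\colon y^5-x^7=0$.

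The substance of the lemma is the computation of the conductor $I=\Hom_Y(\LO_X,\LO_Y)$ in the two cases. In the value semigroups $S_X,S_Y$ of $\LO_X,\LO_Y$ the monomial $t^a\in\LO_Y$ lies in $I$ precisely when $a+S_X\subseteq S_Y$. I would run this criterion in both cases and check that the conductor is, as an $\LO_Y$-ideal, minimally generated by $t^{15},t^{16},t^{17},t^{18}$ for $N(4)_7$ and by $t^{15},t^{17},t^{19},t^{21}$ for $N(6)_{12}$. Although these exponent sets differ, in the plane coordinates both lists read $x^3,\ x^2y,\ xy^2,\ y^3$, so in either case $\Sigma$ is the fat point $V((x,y)^3)$ of length $6$ (indeed each $f_i\in(x,y)^3$, so the curve equation contributes nothing further to the ideal of $\Sigma$ in $\A^2$). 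This coincidence---that two a priori different projections yield the same $\Sigma=V((x,y)^3)$---is the point I expect to require real checking; once it is in hand, the rest is formal.

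With $I=(x,y)^3$ in both cases and $\Delta=(x^3,x^2y,xy^2,y^3)$ the row of generating minors, both functions lie in $I$, via $f_1=\Delta(-x^3,0,0,y^2)^t$ and $f_2=\Delta(-x^4,0,0,y^2)^t$. Hence $f_1-f_2=x^7-x^6=x^6(x-1)\in(x,y)^6=I^2$, which is exactly $I^2$-equivalence in the sense of \cite[Def.~1.14]{td1}. By the discussion following \eqref{ikweq} this identifies the negative-weight base spaces of the two versal deformations up to a smooth factor, which is why the projection computation of \cite{Ste93} for $N(4)_7$ also settles $N(6)_{12}$.
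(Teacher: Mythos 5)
Your proposal is correct and takes essentially the same approach as the paper: project onto the first two coordinates to get $y^5-x^6$ and $y^5-x^7$, identify the conductor as $\mathfrak{m}^3$ (you do this via the explicit semigroup criterion $a+S_X\subseteq S_Y$, the paper via the length count $\delta(Y)-g=6$ followed by the same ``easy computation''), and conclude $x^7-x^6\in I^2=\mathfrak{m}^6$. Your explicit generator lists and the vectors expressing $f_1,f_2$ in terms of $\Delta$ simply make the paper's computation visible, and they check out.
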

\begin{proof}
The projection onto the plane of the first two coordinates
has equation $\y(6)^5-x^6=0$ for $N(4)_{7}$  and  $\y(7)^5-x^7=0$ for
$N(6)_{12}$. The conductor ideal $I$ has in both cases length 6, being the
difference of the $\delta$-invariant of the plane curve and 
$\delta=g$ of the monomial curve. An easy computation gives that 
$I=\mathfrak{m}^3$, and therefore $x^7-x^6 \in I^2$.
\end{proof}

We slightly modify the computation given in \cite{Ste93} by disregarding
all terms in $I^2$. We start by describing the deformation of the matrix 
defining $\Sigma$:
\[
\begin{bmatrix}
y&e_{01}&e_{02}\\
-(x+e_{10})&y+e_{11}&e_{12}\\
-e_{20}&-(x+e_{21})&y+e_{22}\\
-e_{30}&-e_{31}&-x
\end{bmatrix}
\]
We consider only consider  deformations of negative weight, so we deform the
equation of the plane curve in the following way:
{\small
\[
y^5+c_0x^5+c_1x^4y+c_2x^3y^2+c_{3}x^2y^{3}+d_0x^{4}+d_1x^{3}y+d_2x^{2}y^2+
d_3xy^3+d_{4}y^{4}
\]
}

\noindent
With the help of  \textsc{Singular} \cite{DGPS} the deformation equation \eqref{basic}
was solved for all generators of $N$.  The equation holds modulo the ideal $J$ of
 the base space,  described below. We give here the vector $\alpha$ as
 direct result of the computation.

{\small
\begin{itemize}[rightmargin=0.7\parindent]
\item[$\alpha_1=$]
$x^2c_{0}+\half xc_{2}e_{01}+xc_{3}e_{02}+xd_{0}-yc_{3}e_{01}-yc_{1}e_{10}+yc_{2}e_{11}+yc_{3}e_{12}\linebreak -c_{3}e_{02}e_{10} 
    +c_{2}e_{02}e_{20}+c_{3}e_{02}e_{21}-c_{0}e_{01}e_{30}-c_{1}e_{02}e_{30} 
     +c_{0}e_{12}e_{30}\linebreak -e_{10}e_{12}e_{30}-c_{1}e_{01}e_{31}-c_{2}e_{02}e_{31}+c_{0}e_{22}e_{31}+\half d_{2}e_{01}+d_{3}e_{02}$ 
\item[$\alpha_2=$]
$x^2c_{1}+\half xyc_{2}+xc_{3}e_{01}+xc_{1}e_{10}-\half xc_{2}e_{11}+xd_{1}+\half yd_{2} 
     +d_{3}e_{01}\linebreak-e_{01}^2+d_{4}e_{02}+d_{1}e_{10}-\half d_{2}e_{11}$
\item[$\alpha_3=$]
$\half x^2c_{2}+xyc_{3}+xc_{1}e_{20}+\half xc_{2}e_{21}+\half xd_{2}-yc_{3}e_{10}
+yc_{2}e_{20}+yc_{3}e_{21}\linebreak+yd_{3}-c_{3}e_{02}e_{30}+d_{4}e_{01}
+e_{02}e_{10}-e_{01}e_{11} +d_{1}e_{20}+\half d_{2}e_{21}+e_{01}e_{22}$ 
\item[$\alpha_4=$]
$y^2+xc_{1}e_{30}+\half xc_{2}e_{31}+yc_{2}e_{30}+yc_{3}e_{31}+yd_{4}
   +e_{01}e_{10} +e_{02}e_{20}\linebreak+d_{1}e_{30}+\half d_{2}e_{31}-e_{02}e_{31}$ 
\end{itemize}
}

\noindent To describe the base space it is useful to apply a coordinate
transformation, given by:
{\small
\begin{align*}
d_{0}&\mapsto d_{0}+c_{0}e_{21} \\
d_{1}&\mapsto
d_{1}+c_{1}e_{21}+c_{0}e_{31} \\
 d_{2} &\mapsto
d_{2}+c_{1}e_{31} \\
d_{3}&\mapsto d_{3}+e_{01}+c_{3}e_{10}\\
  d_{4} &\mapsto d_{4}+e_{11}
\end{align*}
}

\noindent
In the new coordinates ideal of the base is given by the following 20 generators, which
we write as sum of corresponding minors:
{\tiny
\begin{multline*}
\begin{bmatrix}
 d_{0}+c_{3}e_{02}-c_{0}e_{10}&
d_{1}-c_{1}e_{10}+c_{3}e_{12}-c_{0}e_{20} &  d_{2}-c_{2}e_{10}-c_{0}e_{30}
& d_{3}  &        d_{4}\\
  e_{01}  &  e_{11}  &  e_{10}-e_{21}   &                                     
e_{20}-e_{31} &   e_{30} \end{bmatrix}\\ 
{}+\begin{bmatrix}
 -c_{0}e_{31}& e_{02}-c_{1}e_{31}&-e_{01}+e_{12}-c_{2}e_{31} &
-e_{11}+e_{22}-c_{3}e_{31}  &e_{21} \\
 e_{02}-c_{0}e_{30}&e_{12}-c_{1}e_{30} &       
 e_{22}-c_{2}e_{30} &e_{10}-c_{3}e_{30}  & e_{20} \end{bmatrix}
\end{multline*}
\setlength{\multlinegap}{3pt}
\begin{multline*}
\begin{bmatrix}
 d_{0}  & d_{1} & d_{2}-e_{02}+c_{1}e_{20}-c_{3}e_{22}        
&d_{3}-e_{12}+c_{2}e_{20}+c_{1}e_{30}  &d_{4}-e_{22}+c_{2}e_{30} 
\\  e_{02}& -e_{01}+e_{12}  &-e_{11}+e_{22}  & e_{21} &e_{31}  
\end{bmatrix} \\
{}
+ {1\over c_0}             
\begin{bmatrix}
c_{0}e_{12}-c_{1}e_{02} &c_{0}e_{22}-c_{2}e_{02}       
&c_{0}e_{10}-c_{3}e_{02}  &c_{0}e_{20}& c_{0}e_{30}-e_{02} \\
 c_{1}e_{01}-c_{0}e_{11} &c_{2}e_{01}-c_{0}e_{10}+c_{0}e_{21} 
&c_{3}e_{01}-c_{0}e_{20}+c_{0}e_{31}&-c_{0}e_{30}&e_{01} \end{bmatrix}
\end{multline*}}

\noindent The $ \frac1{c_0}$ in front of the last matrix means that each minor
has to be divided by $c_0$.
The structure of this base space is discussed in\cite{Ste93}.

According to the formula \eqref{ikweq} we obtain the deformation 
for $N(4)_7$ by adding $\Delta_1$ to $\alpha_1$. For $N(6)_{12}$
the terms in $I^2$ are $x^7+b_0x^6+b_1x^5y+b_2x^4y^2$, so we add
the vector $\Delta_1(x+b_0,b_1,b_2,0)^t$ to $\alpha$. 
In particular we find that the codimension of the base space is the same for
 $N(4)_7$ and $N(6)_{12}$.

The curve $N(6)_{11}  =\langle 5, 8, 9, 11, 12 \rangle$ deforms
with the deformation $(t^5,t^8+s t^7,t^9,t^{11},t^{12})$ of the parametrisation
to a curve with semigroup $\langle 5, 7, 9, 11, 13 \rangle$ but not to the
monomial curve $N(6)_{12}$ with this semigroup:  the deformation
$(t^5,t^7+s' t^8,t^9,t^{11},t^{13})$  of $N(6)_{12}$ is non-trivial of positive weight.
We did not compute the base space for $N(6)_{11}$,  but we 
determined the quadratic part  of the equations. It contains
quadrics of rank two, so the base space is definitely more 
complicated.
It is feasible to compute the deformation with
Hauser's algorithm, but the problem is to simplify the 
resulting equations and write them in a systematic way.
Even for $N(4)_7$ it is very hard to see that the 
equations from Hauser's algorithm give the same base space
as the one above from the projection method.

\begin{prop}\label{propdim}
For all semigroups of  genus $g\leq7$ with $5$ generators and type $4$ one has
$\dim M = 2g +2-\dim T^{1,+}$.
\end{prop}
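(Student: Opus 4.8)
The plan is to determine $\dim\M$ for each of the finitely many relevant semigroups by squeezing it between the two bounds of Theorem~\ref{Stevens}. Since here $t=4$, the lower bound reads $\dim E\ge 2g+2-\dim\mathrm{T}^{1,+}$ for every component $E$, so what must be proved is the matching upper bound $\dim\M\le 2g+2-\dim\mathrm{T}^{1,+}$. By Pinkham's theorem (Theorem~\ref{pinkhamthm}) one has $\dim\M=\dim B^-_s-1$, and $B^-_s$ is open in the negative-weight base $B^-$; combining this with the lower bound gives $\dim B^-\ge\dim\M+1\ge 2g+3-\dim\mathrm{T}^{1,+}$. The whole proposition therefore reduces to the single inequality
\[
\dim B^-\le 2g+3-\dim\mathrm{T}^{1,+},
\]
since the two estimates then force $\dim B^-=2g+3-\dim\mathrm{T}^{1,+}$ and hence $\dim\M=\dim B^-_s-1\le\dim B^--1=2g+2-\dim\mathrm{T}^{1,+}$.

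To obtain this upper bound I would pass to the tangent cone. As $B^-$ carries a good $\mathbb{G}_{m}$-action with all weights positive, every component passes through the origin and $\dim B^-=\dim TC_0B^-$. The tangent cone is defined by the initial forms of the base equations, so it is contained in the variety $Q$ cut out by their degree-two parts, and thus $\dim B^-\le\dim Q$. Computing $Q$ needs only the versal deformation up to order two, which is inexpensive even in codimension four and was already carried out for the worst case $N(6)_{11}$ in the text above. The verification to be made, for each of the nine semigroups $N(4)_{7},\,N(5)_{8},\,N(6)_{11},\,N(6)_{12},\,N(6)_{19},\,N(7)_{12},\,N(7)_{13},\,N(7)_{24},\,N(7)_{27}$, is then the purely computational statement $\dim Q=2g+3-\dim\mathrm{T}^{1,+}$; the right-hand side is immediate once $\dim\mathrm{T}^{1,+}$ has been read off from the graded $\mathrm{T}^1$ computed at the start of the section.

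Two of the nine cases admit an independent check from a fully computed base. For $N(4)_{7}$ the negative-weight base was determined by the projection method in \cite{Ste93}, and by the Lemma above $N(6)_{12}$ has an $I^2$-equivalent plane projection, so its base agrees with it up to a smooth factor and has the same codimension; in both cases the dimension equals the predicted value. For $N(6)_{19}$ and $N(7)_{24}$ the monomial curve deforms in positive weight into $N(6)_{20}$, respectively $N(7)_{25}$, whose bases are the explicitly known space $G'$; this identifies the top-dimensional component and so confirms the outcome of the quadratic-part computation.

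The hard part will be the verification $\dim Q=2g+3-\dim\mathrm{T}^{1,+}$ precisely in the cases whose structure is otherwise unknown. There $Q$ is typically reducible --- for $N(6)_{11}$ the degree-two part already contains quadrics of rank two --- so one must check that no component of $Q$ exceeds the expected dimension. Should the quadratic forms overshoot, the inclusion $TC_0B^-\subseteq Q$ is strict and $Q$ no longer controls $\dim B^-$; in that event I would fall back on Hauser's algorithm of Section~\ref{hauser} to produce enough higher-order base equations to cut $\dim B^-$ down to the target, the real nuisance being the elimination of the many auxiliary variables and the long, unstructured equations it leaves behind. As there are only nine semigroups, this remains a finite, if tedious, check.
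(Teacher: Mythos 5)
Your overall squeeze strategy coincides with the paper's, and for the cases that genuinely require computation your plan is essentially what is done there: for $N(7)_{13}$ and $N(7)_{27}$ the order-two deformation yields $20$ quadrics which cut out a projective scheme of dimension $15$, matching the lower bound of Theorem \ref{Stevens}; for $N(7)_{24}$ the quadratic part cannot suffice because one of the $21$ base equations begins with cubic terms, and the paper does exactly what your fallback proposes, namely a full Hauser computation followed by massive elimination, after which the lowest-degree parts of the $21$ surviving equations define a scheme of dimension $15$. Your independent check for $N(6)_{12}$ via the $I^2$-equivalent plane projection of $N(4)_{7}$ is also the paper's route. (Your side remark that adjacency of $N(6)_{19}$ and $N(7)_{24}$ to curves with base $G'$ ``identifies the top-dimensional component'' is, however, unsupported: a positive- or negative-weight deformation into another curve does not by itself bound the dimension of the base space.)

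The genuine gap is that you overlook the paper's first and decisive observation: five of the nine semigroups --- $N(4)_{7}$, $N(5)_{8}$, $N(6)_{11}$, $N(6)_{19}$, $N(7)_{12}$ --- are negatively graded, so $\dim\mathrm{T}^{1,+}=0$ and the lower and upper bounds of Theorem \ref{Stevens} already coincide at $2g+2$; for these cases the proposition needs no computation whatsoever. Your plan instead demands the verification $\dim Q=2g+3-\dim\mathrm{T}^{1,+}$ for all nine semigroups, and in the negatively graded cases this is not guaranteed to close: the inclusion of the tangent cone in $Q$ can be strict, and for $N(6)_{11}$ the paper explicitly records that the quadratic part contains quadrics of rank two and that the base space was left undetermined because the output of Hauser's algorithm --- precisely your fallback --- could not be usefully simplified. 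There is also a logical overreach in your reduction: the inequality $\dim B^-\leq 2g+3-\dim\mathrm{T}^{1,+}$ is sufficient but strictly stronger than what the proposition asserts, since the Rim--Vitulli bound controls only components of $\M$, i.e.\ components of $B^-$ containing smooth fibres, while $B^-$ may possess non-smoothing components of excess dimension (compare $N(7)_{10}$, where $B^-$ has two components of different dimensions and only the smallest is a smoothing component). So on the five negatively graded semigroups your argument as written may be uncloseable, whereas the correct observation disposes of them in one line; on the remaining four it reproduces the paper's proof.
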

\begin{proof}
If the monomial curve is negatively graded the result
 follows directly from the Rim--Vitulli formula in Theorem \ref{Stevens}.
 For $N(7)_{13}$ and $N(7)_{27}$ a computation of the deformation up
 to order two yields 20 quadratic equations which are among the equations
 for the tangent cone to the base space (and probably give the tangent cone exactly).
 These 20 equations define a projective scheme of dimension 15, which is therefore
 an upper bound for the dimension of $\M$. At the same time Theorem \ref{Stevens}
 gives 15 as lower bound.
 
 For  $N(7)_{24}$ the situation is more complicated. 
 One of the 21 equations starts with cubic terms. We did compute the base space 
 with Hauser's method. It leads to 256 equations in 63 variables, with 6923 monomials
 in total.  These equations are not independent, in fact they can be reduced
 to 59 equations. Thirty eight variables occur linearly and can be eliminated. The 
 result consists of 21 equations in 25 variables, with 24829 monomials in total; the equation
 starting with cubic terms has 3196 monomials. Taking the lowest degree part
 of each equation gives a manageable system with 163 monomials defining
 a scheme of dimension 15.
\end{proof}

\subsection{Higher codimension}
For the remaining curves we did not determine the base space.
For the curves $N(5)_{12}$, $N(6)_{18}$, $N(7)_{22}$ and $N(7)_{23}$
with type 5 the dimension of $T^2$ is 45, for $N(7)_{34}$ it is 46.
The curves $N(6)_{22}$ and $N(7)_{33}$ have type 6 and $\dim T^2=84$,
while for $N(7)_{39}$, the only type 8 curve,   $\dim T^2=140$.
With decreasing type the
dimension of $T^2$ also decreases: for $N(7)_{35}$, $N(7)_{36}$, $N(7)_{37}$
and $N(7)_{38}$ the dimensions are 28, 19, 14, 14 respectively.
All the curves discussed here are negatively graded, except  $N(7)_{23}$.
For this case the base space was computed up to order two, and a
standard basis of the resulting ideal was computed in finite characterstic,
to speed up the computation. The resulting upper bound for the dimension
of $\mathcal{M}_{7,1}^{\scriptscriptstyle N(7)_{23}}$ again coincides with the lower bound of Theorem \ref{Stevens}.
Herewith Proposition \ref{dimprop} is completely establised.

\end{document}